\renewcommand*{\backref}[1]{}
\renewcommand*{\backrefalt}[4]{\ \tiny 
  \ifcase #1 ({\color{red} \bf NOT CITED.})%
  \or    ($\uparrow$#2)%
  \else   ($\uparrow$#2)%
  \fi}
\declaretheorem[numberwithin=section]{theorem}
\declaretheorem[sibling=theorem]{lemma}
\declaretheorem[sibling=theorem, style=definition]{definition}
\declaretheorem[sibling=theorem, style=definition]{example}
\declaretheorem[sibling=theorem, style=remark]{remark}
\setlist[enumerate,1]{label={\upshape(\alph*)},ref=\alph*}
 \newcommand{\Z}{\mathbb{Z}}  \newcommand{\R}{\mathbb{R}} \newcommand{\C}{\mathbb{C}}
\newcommand{\st}{\;\mathord{:}\;}
\DeclareMathOperator{\diam}{diam}
\renewcommand{\setminus}{\smallsetminus}
\renewcommand{\emptyset}{\varnothing}
\renewcommand{\epsilon}{\varepsilon}
\newcommand{\arxiv}[2]{\href{http://arxiv.org/abs/#1}{arXiv: {#1} [{#2}]}}
\newcommand{\doi}[1]{\href{http://doi.org/#1}{\tt doi}} 
\newcommand{\directlink}[1]{\href{#1}{\tt URL}}
\newcommand{\MRev}[1]{\href{https://mathscinet.ams.org/mathscinet-getitem?mr=#1}{\tt MR}} 
\newcommand{\Zbl}[1]{\href{https://zbmath.org/?q=an:#1}{\tt Zbl}} 
\DeclareFontFamily{U} {MnSymbolA}{} 
\DeclareFontShape{U}{MnSymbolA}{m}{n}{
   <-6> MnSymbolA5
   <6-7> MnSymbolA6
   <7-8> MnSymbolA7
   <8-9> MnSymbolA8
   <9-10> MnSymbolA9
   <10-12> MnSymbolA10
   <12-> MnSymbolA12}{}
\DeclareFontShape{U}{MnSymbolA}{b}{n}{
   <-6> MnSymbolA-Bold5
   <6-7> MnSymbolA-Bold6
   <7-8> MnSymbolA-Bold7
   <8-9> MnSymbolA-Bold8
   <9-10> MnSymbolA-Bold9
   <10-12> MnSymbolA-Bold10
   <12-> MnSymbolA-Bold12}{}
\DeclareSymbolFont{MnSyA} {U} {MnSymbolA}{m}{n}
\DeclareFontFamily{U} {MnSymbolC}{}
\DeclareFontShape{U}{MnSymbolC}{m}{n}{
  <-6> MnSymbolC5
  <6-7> MnSymbolC6
  <7-8> MnSymbolC7
  <8-9> MnSymbolC8
  <9-10> MnSymbolC9
  <10-12> MnSymbolC10
  <12-> MnSymbolC12}{}
\DeclareFontShape{U}{MnSymbolC}{b}{n}{
  <-6> MnSymbolC-Bold5
  <6-7> MnSymbolC-Bold6
  <7-8> MnSymbolC-Bold7
  <8-9> MnSymbolC-Bold8
  <9-10> MnSymbolC-Bold9
  <10-12> MnSymbolC-Bold10
  <12-> MnSymbolC-Bold12}{}
\DeclareSymbolFont{MnSyC} {U} {MnSymbolC}{m}{n}
\DeclareMathSymbol{\top}{\mathord}{MnSyA}{219} 
\DeclareMathSymbol{\bot}{\mathord}{MnSyA}{217}
\DeclareMathSymbol{\smallplus}{\mathord}{MnSyC}{20} 
\DeclareMathSymbol{\smallminus}{\mathord}{MnSyC}{16} 
\DeclareMathSymbol{\smalltimes}{\mathord}{MnSyC}{21}
\DeclareMathSymbol{\smallpm}{\mathord}{MnSyC}{22} 
\DeclareMathSymbol{\smallmp}{\mathord}{MnSyC}{23}
\begin{document}

\title{Removing quasiconformal orbits}
\author{Jairo Bochi}
\address{Department of Mathematics, The Pennsylvania State University}
\email{\href{mailto:bochi@psu.edu}{bochi@psu.edu}}
\date{November 12, 2025}
\subjclass[2020]{37D30; 37B02, 37B20}

\maketitle

\begin{abstract}
We show that generic continuous linear cocycles over shifts and other zero-dimensional systems admit no quasiconformal orbits, thus providing a partial answer to a question of Nassiri, Rajabzadeh, and Reshadat. The proof relies on a new result about towers for homeomorphisms of zero-dimensional spaces, which may be of independent interest. 
\end{abstract}


\section{Introduction}

Linear cocycles model compositions of linear maps that are driven by an underlying dynamical system. 
They have been studied from various perspectives and are useful in diverse contexts: see the textbooks \cite{LArnold,BP,ColoniusK,DamanikF,JohnEtc,Viana}.
This paper deals with discrete-time continuous linear cocycles in the topological (continuous) setting.
We now provide a precise definition.  

Let $T \colon X \to X$ be a homeomorphism of a compact metric space $X$.
Let $\mathbb{F}$ be either the field $\R$ or the field $\C$.
Let $d$ be a positive integer. 
A \emph{$d$-dimensional continuous linear cocycle} over $T$ is a continuous map 
$(x,n) \in X \times \Z \mapsto F^{(n)}(x) \in \mathrm{GL}(d,\mathbb{F})$ such that 
\begin{equation}
	F^{(m+n)}(x) = F^{(m)}(T^n x) F^{(n)}(x)  \quad \text{\emph{(cocycle identity)}.}
\end{equation}
Note that the cocycle is uniquely determined by the map $F \coloneq F^{(1)}$.
For this reason, we also refer to the pair $(T,F)$ as a linear cocycle. 
The set of linear cocycles over $T$ can be identified with the space $C^0(X,\mathrm{GL}(d,\mathbb{F}))$, which we endow with the complete metric 
\begin{equation}\label{e.def_metric}
	d(F,G) \coloneq \sup_{x \in X} \left( \|F(x)-G(x)\| + \|F(x)^{-1}-G(x)^{-1} \| \right) \, ,
\end{equation}
For definiteness, $\|A\|$ will denote the Euclidean-induced operator norm of a matrix~$A$; 
equivalently, $\|A\| = \sigma_1(A)$, where $\sigma_1(A) \ge \cdots \ge \sigma_d(A)$ denote the singular values of $A$.

Let $(T,F)$ be a continuous linear cocycle as above, and fix an integer $k$ with $0<k<d$.
Let us say that the cocycle admits an \emph{exponential gap} between $k$-th and $k+1$-th singular values if there exist positive constants $c$ and $\tau$ such that, for all $x \in X$ and $n > 0$,
\begin{equation}
\sigma_k(F^{(n)}(x)) > c e^{\tau n} \sigma_{k+1}(F^{(n)}(x)) \, .
\end{equation}
As proved in \cite{BG}, this condition is satisfied if and only if the cocycle admits a \emph{dominated splitting} with a dominating bundle of dimension $k$ -- we refer to the article for the exact definition. 
Dominated splittings play an important role in many problems in dynamics: see \cite{BG,BPS,NRR,Sambarino} and references therein.

The \emph{condition number} of a matrix $A \in \mathrm{GL}(d,\mathbb{F})$ is defined as
\begin{equation}\label{e.def_kappa}
	\kappa(A) \coloneq \|A\| \, \|A^{-1}\|  = \frac{\sigma_1(A)}{\sigma_d(A)} \, .
\end{equation}
Note that $\kappa(A) \ge 1$, with equality if and only if $A$ preserves the standard inner product up to a scalar factor. 
Following Nassiri, Rajabzadeh, and Reshadat \cite{NRR}, 
given a continuous linear cocycle as above, 
we say that a point $x \in X$ has a \emph{quasiconformal orbit} if
\begin{equation}\label{e.def_qc_orbit}
	\sup_{n \in \Z} \kappa(F^{(n)}(x)) < \infty \, .
\end{equation}

It is clear that a cocycle admitting a dominated splitting cannot have quasiconformal orbits. 
It is also clear that the converse statement is false, that is, a cocycle can fail to have a dominated splitting and at the same time also fail to have quasiconformal orbits: consider for example a cocycle generated by non-trivial unipotent matrices. 
Nevertheless, in some situations, the converse statement is true up to perturbation: see \cite[Theorem~A]{NRR} (see also \cite[Proposition~2.1]{BDP}, \cite[Theorem~3]{BRams} for related results).

Given a continuous cocycle $(T,F)$ taking values in the group $\mathrm{GL}(2,\R)$, if $p \in X$ is a periodic point of period $\ell$ and the matrix $F^{(\ell)}(p)$ has non-real eigenvalues, then the orbit of $p$ is quasiconformal.
Furthermore, this property will persist under sufficiently small perturbations of $F$ in the $C^0$ topology.
Thus, existence of quasiconformal orbits can be $C^0$-robust.
However, this reasoning breaks down if the dimension is at least $3$, or if the field is $\C$. 

If the base dynamics is a full shift, Nassiri, Rajabzadeh, and Reshadat \cite[Theorem~B]{NRR} show that for any dimension $d \ge 2$, H\"older continuous $\mathrm{GL}(d,\R)$-cocycles possessing quasiconformal orbits form a set of nonempty interior with respect to the appropriate H\"older topology. They also ask whether this robustness is possible in the $C^0$ topology (excluding the trivial $2$-dimensional case explained above): see \cite[Question~8.2]{NRR}.
The goal of the present note is to answer this question in the negative, at least under some additional hypotheses on the topological dynamical system $(X,T)$. 

Let us recall that a topological space has \emph{zero topological dimension} if every open cover can be refined to an open partition. 
In this paper, we work with compact metric spaces, in which case having zero topological dimension is equivalent to the existence of a basis of the topology formed by clopen sets, among several other characterizations: see \cite[Theorem~2.9.1]{Coor} and \cite[Theorem~2.10]{Putnam}.

Our result is as follows:

\begin{theorem}\label{t.main}
Let $X$ be a compact metric space of zero topological dimension.
Let $T \colon X \to X$ be a homeomorphism with finitely many periodic points of any given period. 
Let $\mathbb{F}$ be either $\C$ or $\R$, and let $d \ge 2$.
Additionally, assume that $d \ge 3$ if $\mathbb{F} = \R$ and $T$ has periodic points. 
Let $\mathcal{QC}$ be the set of $F \in C^0(X, \mathrm{GL}(d,\mathbb{F}))$ such that the corresponding cocycle admits quasiconformal orbits.
Then the set $\mathcal{QC}$ is meager in $C^0(X, \mathrm{GL}(d,\mathbb{F}))$.
\end{theorem}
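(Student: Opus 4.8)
The plan is to exhibit $\mathcal{QC}$ as a countable union of closed nowhere dense sets. For an integer $R \ge 1$, let $\mathcal{QC}_R$ denote the set of $F$ for which some $x \in X$ satisfies $\kappa(F^{(n)}(x)) \le R$ for every $n \in \Z$; then $\mathcal{QC} = \bigcup_{R \ge 1}\mathcal{QC}_R$. Each $\mathcal{QC}_R$ is closed by a soft compactness argument: if $F_j \to F$ in the metric \eqref{e.def_metric} and $x_j$ witnesses $F_j \in \mathcal{QC}_R$, pass (using compactness of $X$) to a subsequence with $x_j \to x$; since $F_j \to F$ and $F_j^{-1} \to F^{-1}$ uniformly and $T$ is continuous, $F_j^{(n)}(x_j) \to F^{(n)}(x)$ for each fixed $n \in \Z$, whence $\kappa(F^{(n)}(x)) \le R$ for all $n$, so $x$ witnesses $F \in \mathcal{QC}_R$. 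It therefore suffices to prove that each $\mathcal{QC}_R$ has empty interior. So we fix $F$ and $\epsilon > 0$ and construct $G$ with $d(F,G) < \epsilon$ having no $R$-quasiconformal orbit. Choose $\delta > 0$ small enough that replacing $F(x)$ by $F(x)C(x)$ for all $x$, with $\|C(x) - I\|, \|C(x)^{-1} - I\| \le \delta$, perturbs $F$ by less than $\epsilon$ in the metric \eqref{e.def_metric}, and fix an integer $N_0$ with $(1+\delta)^{N_0} > R^2$.

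The heart of the matter is a single-column estimate. Suppose $B$ is clopen, the sets $B, TB, \dots, T^{h-1}B$ are pairwise disjoint, and for $y \in B$ we set $G(T^i y) = F(T^i y)\,C_i(y)$, where $C_i(y)$ multiplies a unit vector $v_i(y)$ by $1+\delta$ and fixes its orthogonal complement; here $v_0(y)$ is a unit top right singular vector of $F^{(h)}(y)$, and $v_{i+1}(y)$ is the normalisation of $F(T^i y) v_i(y)$. Then $\|C_i(y) - I\|, \|C_i(y)^{-1} - I\| \le \delta$. A telescoping computation gives $\bigl\|G^{(h)}(y)\,v_0(y)\bigr\| = (1+\delta)^h\,\sigma_1(F^{(h)}(y))$ and $\det G^{(h)}(y) = (1+\delta)^h\det F^{(h)}(y)$. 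Hence $\sigma_1(G^{(h)}(y)) \ge (1+\delta)^h\sigma_1(F^{(h)}(y))$, while $\prod_{k=2}^{d}\sigma_k(G^{(h)}(y)) \le \prod_{k=2}^{d}\sigma_k(F^{(h)}(y))$, so $\sigma_d(G^{(h)}(y))$ is at most the geometric mean of $\sigma_2(F^{(h)}(y)),\dots,\sigma_d(F^{(h)}(y))$, hence at most $\sigma_2(F^{(h)}(y))$. Therefore $\kappa(G^{(h)}(y)) \ge (1+\delta)^h\,\sigma_1(F^{(h)}(y))/\sigma_2(F^{(h)}(y)) \ge (1+\delta)^h$. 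Consequently, if $h \ge N_0$, then for any $x$ with $T^{-i}x \in B$ the cocycle identity and submultiplicativity of $\kappa$ give $\kappa(G^{(h-i)}(x))\,\kappa(G^{(-i)}(x)) \ge \kappa(G^{(h)}(T^{-i}x)) \ge (1+\delta)^{N_0} > R^2$, so $\kappa(G^{(m)}(x)) > R$ for some $m \in \Z$, and the orbit of $x$ is not $R$-quasiconformal for $G$.

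It remains to globalise. The periodic points of period $< N_0$ form a finite set, by hypothesis; we enclose their orbits in pairwise disjoint small clopen tubes $\mathcal{T}_1,\dots,\mathcal{T}_m$ (each a disjoint union of clopen sets cyclically permuted by $T$), and on one rung of each tube we postcompose $F$ with a near-identity matrix so that the resulting return matrix $G^{(\ell)}(p)$ has distinct largest and smallest eigenvalue moduli. This correction is available precisely under the hypothesis on $(\mathbb{F},d)$: the sole obstruction is a $2\times 2$ real return matrix with non-real eigenvalues, which is exactly the excluded case $\mathbb{F} = \R$, $d = 2$, $T$ having a periodic point---and there the theorem genuinely fails, such an orbit being robustly quasiconformal. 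With the tubes chosen small enough, along any orbit that remains forever inside a tube the cocycle $G^{(\ell)}$ is uniformly close to a constant matrix with an eigenvalue-modulus gap, hence has a dominated splitting there by \cite{BG}, so $\kappa(G^{(n)}(x)) \to \infty$ along it. Next---and this is where the new tower theorem for zero-dimensional homeomorphisms enters---we apply it to obtain a clopen Kakutani--Rokhlin partition of $X \setminus \bigcup_{\ell < N_0}\mathrm{Fix}(T^\ell)$ into columns of height at least $N_0$, refining the partition consisting of the tubes $\mathcal{T}_i$ and their complement (and also refining a partition adapted to $F$); a periodic point of period $< N_0$ cannot lie in such a column, which is what forces the exceptional set. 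On every column not contained in $\bigcup_i\mathcal{T}_i$ we apply the single-column construction; on columns contained in some $\mathcal{T}_i$ we keep the already modified cocycle. The resulting $G$ is continuous, being pieced together on clopen sets, and satisfies $d(F,G) < \epsilon$. Finally, every orbit either visits the complement of $\bigcup_i\mathcal{T}_i$ at some time---and is then not $R$-quasiconformal, by the single-column estimate applied to the column containing that point---or stays in $\bigcup_i\mathcal{T}_i$ forever, hence (since for small enough tubes no orbit passes directly between distinct tubes, $T(\mathcal{T}_i)$ being disjoint from $\mathcal{T}_j$ for $i \ne j$) stays inside a single tube and is not $R$-quasiconformal by the dominated splitting there. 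Hence $G \notin \mathcal{QC}_R$, as desired.

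The principal obstacle is expected to be the tower theorem itself. For aperiodic zero-dimensional homeomorphisms, fine clopen Kakutani--Rokhlin partitions into arbitrarily tall columns are classical; here, however, periodic points may be dense and occur in every period, so one needs a version tolerating periodic points and discarding only those of bounded period---and the hypothesis of finitely many periodic points per period is exactly what makes the discarded set finite, hence separately manageable. The surrounding work---selecting the tubes, verifying the modulus-gap perturbation under the $(\mathbb{F},d)$ hypothesis, and dovetailing the two perturbations across the clopen partition while preserving the dominated splittings---is routine but must be carried out with care. We note that the single-column estimate is elementary and dimension-free, which is why the scheme goes through in every dimension $d \ge 2$ over both fields, up to the genuinely exceptional $\mathrm{GL}(2,\R)$ case.
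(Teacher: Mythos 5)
Your overall architecture matches the paper's (a Baire decomposition into closed sets $\mathcal{QC}_R$ with empty interior, a clopen tower partition, and a perturbation on each column making condition numbers grow, with periodic orbits as the special case that forces the hypothesis on $(\mathbb{F},d)$), and two of your ingredients are genuinely different and attractive where they are complete: the telescoping determinant estimate on a single column is an elementary substitute for the paper's \cref{l.sing_val_perturb}, and treating a neighborhood of a periodic orbit by creating an eigenvalue-modulus gap in the return matrix and invoking robustness of dominated splittings replaces the paper's \cref{l.eigenval_perturb} together with its ``capturing'' power argument. (A secondary but real defect: your correction $C_i(y)$ is built from a top singular vector $v_0(y)$ of $F^{(h)}(y)$, which need not vary continuously with $y\in B$, so the $G$ you define need not be continuous; the paper sidesteps exactly this by taking $G$ locally constant on small-diameter clopen floors and perturbing a finite list of sampled matrices, and your construction needs an analogous fix.)

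The decisive gap is the globalisation. The tower statement you invoke --- a clopen Kakutani--Rokhlin partition of $X\setminus\bigcup_{\ell<N_0}\mathrm{Fix}(T^\ell)$ into columns of height at least $N_0$ refining the partition into tubes and their complement --- is neither proved by you nor what the paper's \cref{t.dungeon} says, and for an \emph{arbitrary} choice of small clopen tubes it is false: nothing you impose on the tubes prevents a point $x$ outside $\bigcup_i\mathcal{T}_i$ from having both $T^{-1}x$ and $Tx$ (or, more generally, an excursion away from the tubes of length $<N_0$) inside a tube, and such points exist for badly chosen clopen neighborhoods of a periodic orbit even in a full shift. Such an $x$ can belong to no column of height $\ge N_0$ contained in the complement of the tubes, so your case analysis (``visits the complement at some time $\Rightarrow$ apply the single-column estimate to its column'') breaks down precisely for the orbits that shadow a periodic orbit for a long but finite time --- the troublesome orbits of the whole problem. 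What is actually needed is that the tubes can be chosen so that, in addition to being capturing themselves, their \emph{complement} is $N_0$-capturing (so every excursion away from the periodic orbits lasts at least $N_0$ steps), and then a castle for the complement must be built and glued, handling its maximal invariant set via Downarowicz's \cref{l.highcastle} and checking that all resulting towers other than the tubes are tall. This is exactly the content of \cref{l.capturingtower} and of the proof of \cref{t.dungeon}, i.e.\ the technical heart of the paper; your proposal compresses it into one unproved sentence (and indeed flags it as ``the principal obstacle''), so as it stands the proof is incomplete at its central step.
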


\Cref{t.main} applies, for instance, to any subshift on a finite alphabet.

\begin{remark}
The conclusions of \cref{t.main} hold true if instead of assuming that $X$ is zero-dimensional, we assume that $T$ is minimal and $X$ is infinite: see \cite[Section~3.2]{NRR}.
\end{remark}

\begin{remark}
Following \cite{NRR}, we say that a cocycle $(T,F)$ has a \emph{bounded orbit} if $\sup_{n \in \Z} \| F^{(n)}(x) \| < \infty$ for some $x \in X$.
It readily follows from \cref{t.main} that, under the same hypotheses, the $\mathrm{SL}(d,\mathbb{F})$-valued cocycles admitting 
bounded orbits forms a meager subset of $C^0(X, \mathrm{SL}(d,\mathbb{F}))$.
By contrast, the existence of \emph{bounded vectorial orbits} (that is, the existence of $x\in X$ and $v \in \R^d \setminus \{0\}$ such that $\sup_{n \in \Z} \| F^{(n)}(x) v \| < \infty$) is essentially equivalent to failure of uniform hyperbolicity, by results of Ma\~n\'e and Sacker--Sell \cite[Theorems~6.51, 6.52]{ChiLat} (see also \cite[Theorem~3.8.2]{DamanikF} for the $\mathrm{SL}(2,\R)$ case). 
Therefore, existence of bounded vectorial orbits can have non-empty interior in $C^0(X, \mathrm{GL}(d,\mathbb{F}))$.
\end{remark}

To motivate the proof of \cref{t.main}, note that breaking the quasiconformality of any individual orbit would be straightforward if we were allowed to perturb the matrices along that orbit independently. The difficulty lies in achieving this simultaneously for all orbits through a coherent and continuous perturbation of the cocycle. As one might anticipate, our approach relies on a \emph{tower construction}, combined with a simple Baire argument. 

Towers are used in dynamics in innumerable ways, starting with the Rokhlin--Halmos lemma (see e.g.\ \cite{Kornfeld}). Towers are particularly useful in low-regularity constructions like the one in the present paper. 

Towers are inimical to periodic points: a tower of height $n$ cannot contain any periodic points of period $n$ or less. The Rokhlin--Halmos lemma assumes that periodic points form a set of measure zero, and thus can be ignored. 
In our setting, as we aim for a uniform and not an almost-everywhere statement, we cannot disregard the periodic points. Furthermore, the nature of the problem makes periodic orbits somewhat troublesome.

\medskip

We overcome the difficulties above by proving a new type of tower lemma.
Informally, we find partitions of the whole space $X$ by (clopen) towers with the following crucial property.
Short towers, meaning those of height less than a given threshold $N$, actually behave as if they were tall: any orbit, upon first entering a short tower, is \emph{captured} and must spend at least $N$ time units in the tower before exiting.
This is the content of \cref{t.dungeon} below, which is a result in topological dynamics (without any direct relation to linear cocycles), and may be of independent interest.

The remainder of this paper is organized as follows: 
\Cref{s.castles} discusses the basics of towers and castles. 
\Cref{s.local_capture} establishes a capturing property around periodic points.
The proof of the new tower lemma is given in \cref{s.dungeon}.
These three sections focus exclusively on topological dynamics.
In \cref{s.eradicate} we return to linear cocycles and prove \cref{t.main}.
In the final \cref{s.future}, we briefly discuss the possibility of improving our theorems and provide references for some related results.

\medskip
\noindent\textbf{Notations:} The symbols $\sqcup$, $\bigsqcup$ denote disjoint unions.  Double brackets denote intervals of integers, that is, if $m,n \in \Z$, then $\ldbrack m,n \rdbrack \coloneq \{ k \in \Z \st m \le k \le n\}$.

\section{Towers and castles}\label{s.castles}

\begin{definition}\label{def.tower_castle}
Let $T \colon X \to X$ be a bijective map.
Suppose $\ell \ge 1$ and $B \subseteq X$ are such that $B \cap T^{-i} B = \emptyset$ for all $i \in \ldbrack 0, \ell-1 \rdbrack$. 
Then the set
\begin{equation}
	K = \bigsqcup_{i=0}^{\ell-1} T^i B
\end{equation}
is called a \emph{tower} with \emph{base} $B$ and \emph{height} $\ell$ with respect to the dynamics $T$, and each of the (disjoint) sets $B, TB, \dots, T^{\ell-1} B$ is called a \emph{floor} of the tower. 
A \emph{castle} is any disjoint union of towers, and its \emph{base} is the union of their bases.
When a castle covers the whole space we call it a \emph{castle partition}.
\end{definition}

Castles are also called \emph{skyscrapers} or \emph{multitowers} in the literature. 
We follow the terminology of \cite[p.~39]{Putnam}.

In this paper, we are interested in constructing castles composed of finitely many towers. 
For that reason, the following defintion will prove itself useful:

\begin{definition}\label{def.feedback}
Let $T \colon X \to X$ be a bijective map.
A subset $E \subseteq X$ is called \emph{feedback set} with respect to $T$ if there exists $n_0 > 0$ such that every segment of orbit of length $n_0$ hits $E$, that is,
\begin{equation}\label{e.feedback}
	\bigcup_{n=0}^{n_0-1} T^{-n} E = X \, .
\end{equation}
\end{definition}

Equivalently, $E$ is a feedback set if and only if for every $x \in X$, the set of times $n \in \Z$ such that $T^n x \in E$ has bounded gaps (i.e., it is \emph{syndetic} \cite[p.~28, Definition 1.7]{Fu81}). A feedback set is also called a \emph{marker set}: see \cite[p.~98, Definition~2]{Down}.
A related but distinct notion in measurable dynamics is that of a \emph{sweep-out set}: see \cite[p.~162]{AlpPra}.
Our terminology is motivated by the following example:

\begin{example}
Suppose $T$ is a two-sided subshift of finite type (not necessarily of memory $1$).
Then $T$ can be represented as the edge shift of a higher edge  graph $G$: see \cite[Theorem~2.3.2]{LindMarcus}. 
Suppose $F$ is a set of edges of $G$ which is a feedback set in the sense of graph theory \cite[Chapter~15]{Bang}, that is, the removal of those edges makes the graph acyclic. 
Let $E\subseteq X$ be the union of all cylinders corresponding to the edges in $F$; then $E$ is a feedback set in the sense of \cref{def.feedback}.
The same construction applies to any homeomorphism of a zero-dimensional space, using graph covers \cite{BBK}.
\end{example}

The following is a finitary version of the Kakutani--Rokhlin skyscraper construction:

\begin{lemma}\label{l.Kakutani}
Let $T \colon X \to X$ be a bijective map and let $E \subseteq X$ be a feedback set. 
Then there exists a partition of $X$ into finitely many towers forming a castle with base $E$.
Additionally, if $T$ is a homeomorphism of a topological space, and $E$ is a clopen subset, then the castle partition above can be chosen in such a way that each floor of each tower is a clopen set. 
\end{lemma}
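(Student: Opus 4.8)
The plan is to run the classical Kakutani--Rokhlin first-return construction, with the feedback hypothesis supplying a uniform bound that keeps the resulting castle finite. Fix $n_0 > 0$ witnessing \eqref{e.feedback}. For $x \in E$, applying \eqref{e.feedback} to the point $Tx$ produces an index $n \in \ldbrack 0, n_0 - 1 \rdbrack$ with $T^{n+1} x \in E$, so the \emph{first-return time}
\[
	r(x) \coloneqq \min \{\, n \ge 1 \st T^n x \in E \,\}
\]
is well defined on $E$ and bounded above by $n_0$. I would then set $E_\ell \coloneqq \{\, x \in E \st r(x) = \ell \,\}$ for $\ell \in \ldbrack 1, n_0 \rdbrack$, so that $E = \bigsqcup_{\ell=1}^{n_0} E_\ell$, and let $K_\ell \coloneqq \bigsqcup_{i=0}^{\ell - 1} T^i E_\ell$.

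First I would check that each $E_\ell$ is a legitimate tower base of height $\ell$: if $y \in E_\ell$ and $T^i y \in E_\ell \subseteq E$ for some $i \in \ldbrack 1, \ell - 1 \rdbrack$, then $r(y) \le i < \ell$, a contradiction, so $E_\ell \cap T^{-i} E_\ell = \emptyset$ for all $i \in \ldbrack 0, \ell - 1 \rdbrack$. The substantive step is to show that $K_1, \dots, K_{n_0}$ partition $X$. For the covering, given $x \in X$, apply \eqref{e.feedback} to $T^{-(n_0-1)} x$ to obtain $m \in \ldbrack -(n_0-1), 0 \rdbrack$ with $T^m x \in E$; let $j \ge 0$ be minimal with $T^{-j} x \in E$ and put $y \coloneqq T^{-j} x$, $\ell \coloneqq r(y)$. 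Then $j \le \ell - 1$, since if $j \ge \ell$ then $T^{-(j-\ell)} x = T^\ell y \in E$ with $0 \le j - \ell < j$, contradicting the minimality of $j$; hence $x = T^j y \in T^j E_\ell \subseteq K_\ell$. For disjointness, suppose $x \in T^i E_\ell \cap T^{i'} E_{\ell'}$ with, say, $0 \le i \le i'$, $i < \ell$, and $i' < \ell'$; then $T^{-i}x, T^{-i'}x \in E$, and if $i < i'$ then $T^{\,i'-i}(T^{-i'}x) = T^{-i}x \in E$ forces $\ell' = r(T^{-i'}x) \le i' - i \le i' < \ell'$, which is absurd, so $i = i'$ and then $T^{-i}x \in E_\ell \cap E_{\ell'}$ gives $\ell = \ell'$. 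Together with the within-tower disjointness of floors just established, this shows the representation is unique, so we get a castle partition with base $\bigsqcup_\ell E_\ell = E$.

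It then remains to handle the topological refinement. Assuming $T$ is a homeomorphism and $E$ clopen, every floor has the form $T^i E_\ell$, and since $T^i$ is a homeomorphism it suffices to see that $E_\ell$ is clopen; but
\[
	E_\ell \;=\; E \,\cap\, \Bigl( \bigcap_{i=1}^{\ell - 1} T^{-i}(X \setminus E) \Bigr) \,\cap\, T^{-\ell} E
\]
is a finite intersection of clopen sets, being preimages under the homeomorphisms $T^i$ of the clopen sets $E$ and $X \setminus E$.

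I do not expect a genuine obstacle here: this is a routine adaptation of a classical construction, and the arguments above are elementary. The two points worth stating carefully are that the first assertion uses only bijectivity of $T$ (no topology enters), and that what makes the castle \emph{finite} — which is the whole reason for assuming a feedback set rather than mere recurrence — is precisely the uniform bound $r \le n_0$; the clopen refinement in turn relies only on continuity of $T^{-1}$, so that preimages of clopen sets are clopen.
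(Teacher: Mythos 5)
Your proposal is correct and follows essentially the same route as the paper: decompose $E$ by first-return time (your $E_\ell$ are exactly the paper's $B_\ell$), note the feedback bound $r \le n_0$ keeps the number of towers finite, and observe the bases are clopen as finite intersections of preimages of clopen sets. You simply spell out the covering and disjointness verifications that the paper leaves implicit, so there is nothing to fix.
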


\begin{proof}
Take $n_0>0$ such that property \eqref{e.feedback} holds.
For each $\ell \in \ldbrack 1, n_0 \rdbrack$, let
\begin{equation}
B_\ell \coloneq E \cap T^{-\ell}(E) \setminus \bigcup_{i=1}^{\ell-1} T^{-i}(E) \, ,
\end{equation}
that is, the set of points in $E$ whose first return to $E$ occurs at time $\ell$.
These sets form a partition of $E$. 
Furthermore, each set $B_\ell$ is the base of a tower of height $\ell$, and those towers form a partition of $X$.
If $T$ is a homeomorphism and $E$ is a clopen set, then each set $B_\ell$ is clopen.
\end{proof}

The following is a simple but useful observation:

\begin{lemma}\label{l.open_feedback}
Suppose $T \colon X \to X$ is a homeomorphism of a compact metric space~$X$.
An open set $E \subseteq X$ is a feedback set if and only if every (two-sided) orbit hits~$E$, that is,
\begin{equation}
	\bigcup_{n \in \Z} T^{-n} E = X \, .
\end{equation}
\end{lemma}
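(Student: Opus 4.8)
The plan is to prove the two implications separately. The forward implication is immediate: if $E$ is a feedback set, with $n_0 > 0$ witnessing \eqref{e.feedback}, then already $\bigcup_{n=0}^{n_0-1} T^{-n} E = X$, so a fortiori $\bigcup_{n \in \Z} T^{-n} E = X$; note that this direction uses neither compactness nor openness. The content is the reverse implication, and the idea is a routine compactness argument, the point being that since $T$ is a homeomorphism and $E$ is open, every iterate $T^{-n} E$ is open.

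So I would assume $\bigcup_{n \in \Z} T^{-n} E = X$. For each integer $N \ge 0$ put $U_N \coloneqq \bigcup_{n=-N}^{N} T^{-n} E$; this is an open set, the sequence $(U_N)_{N \ge 0}$ is increasing, and $\bigcup_{N \ge 0} U_N = \bigcup_{n \in \Z} T^{-n} E = X$. By compactness of $X$ there is some $N$ with $U_N = X$. It then remains to convert this symmetric time window into the one-sided window required by \cref{def.feedback}. Writing $T^{-n} E = T^{N}(T^{-n-N} E)$ and letting $j \coloneqq n + N$ run over $\ldbrack 0, 2N \rdbrack$ as $n$ runs over $\ldbrack -N, N \rdbrack$, we get $U_N = T^{N}\bigl(\bigcup_{j=0}^{2N} T^{-j} E\bigr)$; applying the homeomorphism $T^{-N}$ to both sides of $U_N = X$ therefore yields $\bigcup_{j=0}^{2N} T^{-j} E = X$, so $n_0 \coloneqq 2N+1$ witnesses that $E$ is a feedback set.

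I do not expect any real obstacle here; the only thing one must be slightly careful about is that the compactness step genuinely requires the \emph{symmetric} windows $U_N$, and not the one-sided unions $\bigcup_{n=0}^{N} T^{-n} E$: for a general homeomorphism an orbit may visit $E$ only at negative times, so the one-sided unions need not exhaust $X$ even though the two-sided ones do. Once the symmetric window is in hand, the passage to a one-sided window is pure index bookkeeping, enabled precisely by the invertibility of $T$; and the openness of $E$ is what makes each $T^{-n} E$, hence each $U_N$, open, so that compactness applies.
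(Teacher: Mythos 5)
Your proof is correct, and it simply fills in the routine compactness argument that the paper leaves as ``Immediate'': openness of the iterates $T^{-n}E$, compactness to extract a finite (symmetric) subcover, and invertibility of $T$ to shift to a one-sided window, exactly as intended. No issues.
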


\begin{proof}
Immediate.
\end{proof}

Suppose we want to find a castle partition of the space $X$ into uniformly high towers.
There is an obstruction: $T$ cannot have periodic points of small period.
The following \lcnamecref{l.highcastle} shows that this obvious necessary condition is also sufficient, at least when the ambient space is zero-dimensional:

\begin{lemma}[Downarowicz]\label{l.highcastle}
Let $X$ be a compact metric space of zero topological dimension and let $T \colon X \to X$ be a homeomorphism.
Let $N>0$ and suppose that $T$ has no periodic points of period less than $N$.
Then there exists a castle partition of $X$ into finitely many towers of height $\ge N$ whose floors are clopen sets.
\end{lemma}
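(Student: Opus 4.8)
The plan is to find a clopen feedback set $E$ that avoids all periodic points of period less than $N$, and then invoke \cref{l.Kakutani} to obtain a castle partition with base $E$. The point of choosing $E$ disjoint from the short periodic orbits is that, in the castle produced by \cref{l.Kakutani}, a tower of height $\ell$ consists of points of $E$ whose first return to $E$ is at time $\ell$; if such a tower had height $\ell < N$, the orbit segment $x, Tx, \dots, T^{\ell-1}x$ would start and return to $E$ after $\ell$ steps, but nothing prevents $\ell$ from being small unless we rule it out directly. So we need a bit more: we want $E$ to be a feedback set such that, moreover, no orbit returns to $E$ in fewer than $N$ steps. Equivalently, we want a clopen set $E$ with $E \cap T^{-i}E = \emptyset$ for $1 \le i \le N-1$ and $\bigcup_{n \in \Z} T^{-n}E = X$; the first condition makes every base $B_\ell$ from \cref{l.Kakutani} supported on $\ell \ge N$, the second makes $E$ a feedback set by \cref{l.open_feedback}.

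First I would reduce to finding, for each $x \in X$, a clopen neighborhood $U_x$ with $U_x \cap T^i U_x = \emptyset$ for $1 \le i \le N-1$. Since $T$ has no periodic points of period less than $N$, the points $x, Tx, \dots, T^{N-1}x$ are pairwise distinct; by zero-dimensionality (clopen sets form a basis) and continuity of $T$, choose a clopen $U_x \ni x$ small enough that $U_x, TU_x, \dots, T^{N-1}U_x$ are pairwise disjoint. Next I would pass from this \emph{local} separation to a \emph{global} one while keeping the feedback property. Cover $X$ by finitely many such $U_{x_1}, \dots, U_{x_m}$; this cover witnesses the feedback property, but the $U_{x_j}$ overlap and a point may lie in $U_{x_j} \cap T^i U_{x_k}$ for different indices. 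Refine the cover to a clopen \emph{partition} $\mathcal{P} = \{P_1, \dots, P_r\}$ of $X$ subordinate to $\{U_{x_j}\}$; each $P_s$ still satisfies $P_s \cap T^i P_s = \emptyset$ for $1 \le i \le N-1$, and $\bigcup_s P_s = X$, so $E_0 := \bigcup_s P_s = X$ is trivially feedback but useless — the issue is cross-terms $P_s \cap T^i P_{s'}$ with $s \ne s'$.

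To kill the cross-terms I would use a greedy/inductive selection on the partition, which is the standard trick behind these "marker" constructions. Order the atoms $P_1, \dots, P_r$ and build $E$ by scanning: put $P_1$ into $E$; having decided on $P_1, \dots, P_{s-1}$, add $P_s$ to $E$ only if doing so does not create a return of length $< N$, i.e.\ only if $P_s \cap T^{-i}(\text{current } E) = \emptyset$ for all $1 \le i \le N-1$ — and if we do not add $P_s$, it is because some point of $P_s$ maps into the current $E$ within $N-1$ steps, so $P_s$ is already ``covered forward'' by $E$. Carrying this out carefully, $E$ is clopen (a finite union of atoms), satisfies the separation $E \cap T^{-i}E = \emptyset$ for $1 \le i \le N-1$ by construction, and is a feedback set because every atom is either in $E$ or maps forward into $E$ within $N-1$ steps; by \cref{l.open_feedback} and then \cref{l.Kakutani}, the resulting castle partition has all towers of height $\ge N$ with clopen floors.

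The main obstacle I expect is the global-to-local step: making the greedy selection actually yield a feedback set rather than merely a separated set. The subtlety is that when we decline to add an atom $P_s$ because it meets $T^{-i}E$ for some $i$, we must be sure this genuinely guarantees the feedback inequality $\bigcup_{n=0}^{n_0-1} T^{-n}E = X$ for some uniform $n_0$ — and the cleanest way is probably to work not atom-by-atom but with the finite cover $\{U_{x_j}\}$ directly, choosing $E$ as a union of sets of the form $U_{x_j} \setminus (\text{stuff already handled})$ so that translates of $E$ cover $X$ in at most, say, $n_0 = 2N$ steps. Getting the bookkeeping right so that both the $N$-separation and the finite-time covering hold simultaneously is the crux; everything else (clopenness, the final appeal to \cref{l.Kakutani}) is routine. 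I would also double-check the degenerate case where $T$ has a finite orbit of length exactly $N$ — allowed by the hypothesis, since only periods strictly less than $N$ are excluded — to make sure such an orbit fits into a single height-$N$ tower rather than breaking the construction.
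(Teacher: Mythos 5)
Your overall strategy is the same as the paper's: build a clopen set $E$ with $E \cap T^{-i}E = \emptyset$ for $1 \le i \le N-1$ that is also a feedback set, then feed it to \cref{l.Kakutani} so that all first-return times, hence all tower heights, are $\ge N$. The local step (clopen $U_x$ with $U_x, TU_x, \dots, T^{N-1}U_x$ disjoint, using the absence of periods $<N$) and the final appeal to \cref{l.open_feedback} and \cref{l.Kakutani} are fine. But the step you yourself flag as the crux is genuinely missing, and the concrete mechanism you propose for it does not work. In the atom-by-atom greedy selection, when you decline an atom $P_s$ because it meets $T^{\pm i}E$ for some $1 \le i \le N-1$, you only learn that \emph{some} point of $P_s$ comes within $N-1$ steps of $E$; the other points of $P_s$ may have orbits that never approach any selected atom, and nothing in the construction forces them to hit $E$ at all (outside the minimal case, where feedback is automatic, this is a real failure mode). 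So the "covered forward" claim, which is what the feedback property rests on, is unjustified, and with it the whole construction.

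The correct fix is exactly the parenthetical you relegate to "bookkeeping", and it is the entire content of the argument: do not make an all-or-nothing decision on atoms, but add, for each cover element, only its \emph{free part}. Concretely, with a finite cover $V_1,\dots,V_m$ by clopen sets satisfying $V_j \cap T^iV_j=\emptyset$ for $1\le i\le N-1$, set $B_0=\emptyset$ and $B_{j+1}=B_j\cup\bigl(V_{j+1}\setminus E_j\bigr)$, where $E_j=\bigcup_{n=-N+1}^{N-1}T^nB_j$ is the two-sided $(N-1)$-step orbit neighborhood of the current base. A short induction shows the $N$-separation $B_j\cap T^iB_j=\emptyset$ ($1\le i\le N-1$) is preserved, precisely because what is removed is the \emph{two-sided} neighborhood $E_j$ and each $V_{j+1}$ is self-separated. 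And feedback comes for free pointwise rather than atomwise: every point of $V_{j+1}$ is either put into $B_{j+1}$ or already lies in $E_j$, so $V_{j+1}\subseteq E_{j+1}$, hence $X=\bigcup_jV_j\subseteq E_m$, i.e.\ every point is within $N-1$ steps (forward or backward) of $B_m$, giving the feedback property with $n_0=2N-1$. With this substitution your outline becomes the paper's proof; your worry about a periodic orbit of period exactly $N$ is harmless, since such an orbit fits in a tower of height $N$ and the construction never needs to exclude it.
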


Downarowicz original statement \cite[p.~98, Lemma~1]{Down} yields the conclusion of \cref{l.highcastle} under the stronger hypothesis that $T$ has no periodic points at all; nevertheless his argument proves the statement above. Downarowicz also allows $T$ to be non-invertible, which requires extra care. For the convenience of the reader we will provide a proof of the statement we need. 

\begin{proof}[Proof of \cref{l.highcastle}]
Suppose $T \colon X \to X$ is a homeomorphism without periodic points of period less than $N$, where $X$ is a zero-dimensional compact metric space $X$. 
Then each point in $X$ admits a clopen neighborhood $V$ which is disjoint from $TV,\dots,T^{N-1} V$. By compactness, we can cover $X$ by finitely many neighborhoods $V_1,\dots,V_m$ of this type. 
Define a nested sequence of sets $B_0 \subseteq B_1 \subseteq \cdots \subseteq B_m$ recursively as follows: $B_0 \coloneq \emptyset$ and 
\begin{align}
B_{j+1} &\coloneq B_j \cup ( V_{j+1} \setminus E_j) \, , \quad\text{where} \\ 
E_j &\coloneq \bigcup_{n=-N+1}^{N-1} T^n B_j \, .
\end{align}

We claim that 
\begin{equation}\label{e.claim1}
B_j \cap T^n B_j = \emptyset \text{ for all $j \in \ldbrack 0, m\rdbrack$ and $n \in \ldbrack 0, N-1 \rdbrack$.}
\end{equation}
The claim is proved by induction on $j$: it is clearly true for $j=0$, and the induction step goes as follows:
\begin{equation}
\begin{aligned}
B_{j+1} \cap T^n B_{j+1}  
&= \big(B_j \cup ( V_{j+1} \setminus E_j)\big) \cap T^n \big(B_j \cup ( V_{j+1} \setminus E_j)\big) \\ 
&\subseteq (B_j \cap T^n B_j) \cup (B_j \setminus T^n E_j) \cup (E_j \setminus T^n B_j) \cup (B_j \cap T^n B_j) \\
&= \emptyset \, .
\end{aligned}
\end{equation}
This proves \eqref{e.claim1}.

Next, note that for each $j \in \ldbrack 1, m\rdbrack$,
\begin{equation}
V_j \subseteq \underbrace{(V_j \setminus E_{j-1})}_{\subseteq B_j} \cup \underbrace{E_{j-1}}_{\subseteq E_j} \subseteq E_j \, .
\end{equation}
It follows that
\begin{equation}
E_m = \bigcup_{j=1}^m E_j \supseteq \bigcup_{j=1}^m V_j = X \, .
\end{equation}
In particular, $B_m$ is a feedback set. 
Consider the Kakutani--Rokhlin castle partition of $X$ with base $B_m$, given by \cref{l.Kakutani}.
Each tower has height at least $N$, by \eqref{e.claim1}. 
Since $B_m$ is a clopen set, all floors of all towers are clopen sets. 
\end{proof}

\section{Capturing property and periodic orbits}\label{s.local_capture}

We now introduce the following crucial notion:

\begin{definition}\label{def.capturing}
Let $T$ be a homeomorphism of a compact metric space $X$, and let $N$ be a positive integer. 
A set $E \subseteq X$ is called \emph{$N$-capturing} with respect to $T$ if 
\begin{equation}
E \setminus TE \subseteq \bigcap_{n=0}^{N-1} T^{-n} E  \, .
\end{equation}
\end{definition}

In other words, the set $E$ is $N$-capturing if every orbit, upon first entering $E$, remains in $E$ at least $N-1$ additional time units before exiting.
Another formulation is follows: $E$ is $N$-capturing if and only if
\begin{equation}\label{e.capture_reformulated}
\forall x \in E \ \exists m,n \ge 0 \text{ s.t.\ }
m+n+1 \ge N \text{ and }
\forall i \in \ldbrack -m, n \rdbrack, \ 
T^{i} x \in E \, .
\end{equation}
In particular, $E$ is also $N$-capturing with respect to $T^{-1}$.

\smallskip

Every tower of height $\ell \ge N$ is an $N$-capturing set. 
(On the other hand, a tower of height $\ell < N$ which is a $N$-capturing set is actually $\ell \lceil N/\ell \rceil$-capturing).
Thus, non-periodic points have $N$-capturing neighborhoods, where $N$ can be chosen arbitrarily large.
It turns out that the same is true for periodic points, as a consequence of the following lemma:

\begin{lemma}\label{l.capturingtower}
Let $X$ be a compact metric space and let $T \colon X \to X$ be a homeomorphism.
Suppose $p$ is a periodic point of least period $\ell$.
Let $N$ be a positive integer and let $U$ be a neighborhood of $p$.
Then there exists a neighborhood $B$ of $p$  with $B \subseteq U$ such that the sets $B,TB,\dots,T^{\ell-1}B$ are disjoint, their union
\begin{equation}\label{e.tower_around_orb_p}
K \coloneq \bigsqcup_{i=0}^{\ell-1} T^i B \, ,
\end{equation}
is a $N$-capturing set, and its complement $K^\mathsf{c}$ is also a $N$-capturing set.
Furthermore, if $X$ is zero-dimensional, then $B$ can be chosen as a clopen set. 
\end{lemma}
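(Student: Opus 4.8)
The plan is to construct $B$ by a ``Schmitt trigger'' (hysteresis) mechanism relative to a nested pair of clopen neighborhoods. The underlying idea is that a tower around the orbit of $p$ of height $\ell < N$ can still be $N$-capturing, provided every orbit that enters it is forced to wind around it many times before leaving; the hysteresis is what creates this ratchet. It is convenient to first reformulate the goal: for a clopen set $K$, the conjunction ``$K$ and $K^{\mathsf c}$ are both $N$-capturing'' is equivalent to saying that along every orbit the itinerary $n \mapsto \mathbf{1}_K(T^n x)$ has all of its finite maximal constant runs of length $\ge N$; equivalently, the ``switch set'' $K \triangle T^{-1}K$ is the base of a tower of height $\ge N$ (or is empty). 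So I will arrange that this itinerary changes value at most once in any window of $N$ consecutive times.

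Pass to $S \coloneqq T^\ell$, so that $p$ is a fixed point of $S$; I describe the case $\ell = 1$ (where $S = T$ and $K = B$) in detail and indicate the bookkeeping for general $\ell$ at the end. Choose a clopen neighborhood $W_0 \ni p$ with $W_0 \subseteq U$, and (if $\ell > 1$) with $W_0, TW_0, \dots, T^{\ell-1}W_0$ pairwise disjoint. Let
\[
E \coloneqq \bigcap_{n \in \Z} S^n W_0 ,
\]
the maximal $S$-invariant subset of $W_0$; it is a closed $S$-invariant set with $p \in E \subseteq W_0$. Using zero-dimensionality, construct a ``unit-speed ladder'' of clopen neighborhoods of $E$,
\[
W_0 = V_0 \supseteq V_1 \supseteq \dots \supseteq V_N , \qquad S V_i \cup S^{-1}V_i \subseteq V_{i-1} \quad (1 \le i \le N),
\]
which is possible since $V_{i-1} \cap S V_{i-1} \cap S^{-1}V_{i-1}$ is a clopen neighborhood of $E$. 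Set $\varphi(x) \coloneqq \max\{\, i \in \{-1,0,\dots,N\} : x \in V_i \,\}$ with $V_{-1} \coloneqq X$; its level sets are clopen, and the ladder property gives $|\varphi(Sx) - \varphi(x)| \le 1$. Call $\varphi(x) = N$ the \emph{high} state and $\varphi(x) = -1$ the \emph{low} state, and put
\[
B \coloneqq \bigl\{\, x : \exists\, k \ge 0 \text{ with } \varphi(S^{-k}x) = N \text{ and } \varphi(S^{-i}x) \ne -1 \text{ for all } 0 \le i \le k \,\bigr\} .
\]
Then $V_N \subseteq B \subseteq W_0$, so $B$ is a clopen neighborhood of $p$ contained in $U$; for $\ell > 1$ take $K \coloneqq \bigsqcup_{j=0}^{\ell-1} T^j B$, whose floors are disjoint because $B \subseteq W_0$.

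It remains to verify three things. (i) \emph{$B$ is clopen.} It is manifestly open, being the union over $k$ of the clopen sets $S^k V_N \cap \bigcap_{i=0}^k S^i W_0$. It is also closed: were it not, there would be $x \notin B$ that is a limit of points of $B$, and unwinding the definition one finds that the backward $S$-orbit of such an $x$ stays forever in the clopen set $W_0 \setminus V_N$; but then the ($S$-invariant, nonempty) $\alpha$-limit set of $x$ would be contained both in $W_0$, hence in $E \subseteq V_N$, and in $\overline{W_0 \setminus V_N} = W_0 \setminus V_N$ --- impossible. (Taking a neighborhood of the whole invariant set $E$, rather than merely of $p$, is exactly what makes this work.) (ii) \emph{Capturing.} From the definition of $B$ one checks that an orbit can pass from $B^{\mathsf c}$ into $B$ only at a point with $\varphi = N$, and can pass from $B$ into $B^{\mathsf c}$ only so that the first point outside $B$ has $\varphi = -1$. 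Since $\varphi$ changes by at most $1$ per step, an entry and the following exit are separated by at least $N+1$ steps, as are an exit and the following entry; hence all finite maximal runs of the itinerary have length $\ge N+1$, which by the reformulation above gives that $B$ and $B^{\mathsf c}$ are $N$-capturing. (iii) For the general (not necessarily zero-dimensional) statement, replace the clopen $V_i$ by open sets with $\overline{V_i} \subseteq V_{i-1} \cap S V_{i-1} \cap S^{-1}V_{i-1}$; then $B$ is an \emph{open} neighborhood with the same properties.

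For $\ell > 1$ the same scheme works for $K = \bigsqcup_{j} T^j B$ on the $T$-orbit: an orbit meets $W_0$ at most once in any $\ell$ consecutive times, so its $K$-itinerary is the $B$-itinerary of the associated $S$-orbit dilated by the factor $\ell$, turning the run bound $N+1$ into a run bound $\ge \ell N \ge N$; one minor nuisance is that one may first want to enlarge $W_0$ to a clopen $W_0^- \supseteq W_0$ with disjoint $T$-iterates and $S W_0 \subseteq W_0^-$, and take the ``low'' region to be the complement of the tower over $W_0^-$. The step I expect to be the real obstacle is the clopen-ness in (i): a naive hysteresis between two neighborhoods of $p$ can fail to be clopen because orbits may be trapped near the orbit of $p$ along other recurrent sets accumulating on it, and the remedy --- performing the hysteresis relative to a neighborhood of the maximal invariant set $E$, so that the annular region $W_0 \setminus V_N$ contains no invariant set --- is the heart of the matter.
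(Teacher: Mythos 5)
Your fixed-point case ($\ell=1$) is correct and genuinely different from the paper's argument: the paper builds $B$ explicitly as a union $C\cup D$ of finite intersections of iterates of one small neighborhood of $p$, whereas you run a hysteresis along a ladder of neighborhoods of the maximal invariant subset $E$ of $W_0$; your closedness argument via the $\alpha$-limit set (which is where taking a neighborhood of $E$ rather than of $p$ pays off) is sound, and your variant also produces an open $B$ directly in arbitrary compact metric spaces. Note, though, that the step you flag as ``the real obstacle'' (clopenness) is the part your argument already handles.

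The genuine gap is the passage from the fixed point of $S=T^\ell$ to the period-$\ell$ case, which you treat as bookkeeping. The assertion that ``the $K$-itinerary is the $B$-itinerary of the associated $S$-orbit dilated by the factor $\ell$'' is unjustified exactly where it matters, namely for $K^{\mathsf{c}}$. (Entry into $K$ only through the base floor and the absence of mid-tower exits do give that $K$ itself is $\ell N$-capturing, so that half is fine.) After an exit, at a point $y\in SB\setminus B$, nothing you state prevents the orbit from re-entering $K$ within $N-1$ steps at a time that is \emph{not} a multiple of $\ell$: disjointness of $W_0,TW_0,\dots,T^{\ell-1}W_0$ only forces consecutive visits to $W_0$ to be at least $\ell$ apart, not to remain in a fixed residue class mod $\ell$, and the $N$-capturing of $B^{\mathsf{c}}$ with respect to $S$ only controls the times $\equiv 0\pmod{\ell}$. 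This out-of-phase re-entry is precisely the difficulty the paper's proof spends its second half on: it shrinks the neighborhood to $V=\bigcap_{j=0}^{m}T^{-\ell j}U$ so that the relevant forward images stay inside $U$, and then uses disjointness of $U,\dots,T^{\ell-1}U$ to pin down the phase. Your proposed remedy (enlarging $W_0$ to $W_0^-$ and taking the low region to be the complement of the tower over $W_0^-$) is too vague to check and does not visibly address this. The gap is repairable inside your framework: if $T^m y\in B$ with $1\le m\le N-1$ and $\ell\nmid m$, then the definition of $B$ together with $S^jV_N\subseteq W_0$ for $|j|\le N$ forces visits of the orbit of $y$ to $W_0$ at all times $m-\ell i$ for $0\le i\le k+N$, an arithmetic progression that descends below $0$ and hence contains a time in the interval $(-\ell,0)$; combined with the visit at time $-\ell$ (the point $S^{-1}y\in B\subseteq W_0$), this yields two visits to $W_0$ less than $\ell$ apart, contradicting disjointness. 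Some such argument must be supplied; it is the heart of the periodic case, not a minor nuisance.
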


The statement above should be evident if $p$ is a hyperbolic periodic point of a diffeomorphism, for instance: see \cref{f.bicapturing}. 

We will prove \cref{l.capturingtower} initially for fixed points ($\ell=1$), and later deduce the general statement from this particular case. 

\begin{figure}
	\begin{tikzpicture}[scale=1.7]
		\fill[black!30] (0.045,0.896)--(0.055,0.734)--(0.067,0.601)--(0.081,0.492)--(0.099,0.403)--(0.121,0.330)--(0.148,0.270)--(0.181,0.221)--(0.221,0.181)--(0.270,0.148)--(0.330,0.121)--(0.403,0.099)--(0.492,0.081)--(0.601,0.067)--(0.734,0.055)--(0.896,0.045)--(0.896,-0.045)--(0.734,-0.055)--(0.601,-0.067)--(0.492,-0.081)--(0.403,-0.099)--(0.330,-0.121)--(0.270,-0.148)--(0.221,-0.181)--(0.181,-0.221)--(0.148,-0.270)--(0.121,-0.330)--(0.099,-0.403)--(0.081,-0.492)--(0.067,-0.601)--(0.055,-0.734)--(0.045,-0.896)--(-0.045,-0.896)--(-0.055,-0.734)--(-0.067,-0.601)--(-0.081,-0.492)--(-0.099,-0.403)--(-0.121,-0.330)--(-0.148,-0.270)--(-0.181,-0.221)--(-0.221,-0.181)--(-0.270,-0.148)--(-0.330,-0.121)--(-0.403,-0.099)--(-0.492,-0.081)--(-0.601,-0.067)--(-0.734,-0.055)--(-0.896,-0.045)--(-0.896,0.045)--(-0.734,0.055)--(-0.601,0.067)--(-0.492,0.081)--(-0.403,0.099)--(-0.330,0.121)--(-0.270,0.148)--(-0.221,0.181)--(-0.181,0.221)--(-0.148,0.270)--(-0.121,0.330)--(-0.099,0.403)--(-0.081,0.492)--(-0.067,0.601)--(-0.055,0.734)--(-0.045,0.896)--cycle;
		\draw[thin](-1.3,0)--(1.3,0);
		\draw[thin](0,-1.3)--(0,1.3);
		\draw[thin,-stealth](0, 1.1)--(0, 1);
		\draw[thin,-stealth](0,-1.1)--(0,-1);
		\draw[thin,-stealth]( 1,0)--( 1.07,0);
		\draw[thin,-stealth](-1,0)--(-1.07,0);
	\end{tikzpicture}
	\caption{An $N$-capturing neighborhood $B$ of a hyperbolic fixed point. If $B$ is taken small enough, then the complement $B^\mathsf{c}$ is also $N$-capturing.}\label{f.bicapturing}
\end{figure}

\begin{proof}[Proof of \cref{l.capturingtower} when $\ell=1$]
Suppose $p$ is a fixed point of $T$.
Given $N>0$ and a neighborhood $U$ of $p$, 
take a smaller neighborhood $V$ such that 
\begin{equation}\label{e.smallness}
\bigcup_{i=-2N+1}^{2N-1} T^{-i} V \subseteq U \, .
\end{equation} 
Define sets
\begin{align}
V_\smallplus  &\coloneq \bigcap_{i=0}^{N-1} T^{-i} V \, , \qquad 
V_\smallminus \coloneq \bigcap_{i=0}^{N-1} T^{ i} V  = T^N V_\smallplus \, , \quad \text{and} \\ 
C &\coloneq \bigcup_{j=0}^{N-1} T^j V_\smallplus = \bigcup_{j=0}^{N-1} T^{-j} V_\smallminus \, . 
\end{align}
We have
\begin{equation}
C \setminus TC
= \bigcup_{j=0}^{N-1}  T^j V_\smallplus \setminus \bigcup_{j=1}^{N}  T^j V_\smallplus 
\subseteq V_\smallplus 
= \bigcap_{j=0}^{N-1}  T^{-j}(T^j V_\smallplus) 
\subseteq \bigcap_{j=0}^{N-1}  T^{-j}(C) \, ,
\end{equation}
that is, the set $C$ is $N$-capturing.

Define other sets 
\begin{align}\label{e.def_D}
B &\coloneq C \cup D \, , \quad \text{where}\\
D &\coloneq \bigcup_{\substack{i,j \ge 1 \\ i+j \le N}} (T^{-i} V_\smallplus \cap T^j V_\smallminus) \, . 
\end{align}
Note that 
\begin{equation}\label{e.Bellefonte}
T^{-1} D 
= \bigcup_{\substack{i \ge 2, \, j \ge 0 \\ i+j \le N}} (T^{-i} V_\smallplus \cap T^j V_\smallminus) 
\subseteq V_\smallplus \cup D 
\subseteq B \, .
\end{equation}
Then,
\begin{alignat}{2}
B \setminus TB 
&\subseteq C \setminus TB &\quad &\text{(since $B = C \cup D$ and $D \subseteq TB$)}\\
&\subseteq C \setminus TC &\quad &\text{(since $B \supseteq C$)}\\
&\subseteq \bigcap_{n=0}^{N-1} T^{-n} C &\quad &\text{(since $C$ is $N$-capturing)}\\
&\subseteq \bigcap_{n=0}^{N-1} T^{-n} B &\quad &\text{(since $B \supseteq C$),}
\end{alignat}
proving that $B$ is $N$-capturing. 

We will check that $B^\mathsf{c}$ is also $N$-capturing. 
A reasoning analogous to \eqref{e.Bellefonte} shows that $TD \subseteq B$.
On the other hand,
\begin{equation}\label{e.copycat}
C \setminus T^{-1}C
= \bigcup_{j=0}^{N-1}  T^{-j} V_\smallminus  \setminus \bigcup_{j=1}^{N}  T^{-j} V_\smallminus 
\subseteq V_\smallminus \, . 
\end{equation}
Then the following chain of inclusions holds:
\begin{alignat}{2}
B^\mathsf{c} \setminus TB^\mathsf{c} 
&= TB \setminus B \\
&\subseteq TC \setminus B &\quad &\text{(since $B = C \cup D$ and $TD \subseteq B$)}\\
&= T(C \setminus T^{-1}C) \setminus D \\ 
&\subseteq TV_\smallminus \setminus D &\quad &\text{(by \eqref{e.copycat}).} \label{e.basement}
\end{alignat}
It follows from \eqref{e.def_D} that 
\begin{equation}\label{e.afortiori}
D \supseteq TV_\smallminus \cap \bigcup_{i=1}^{N-1} T^{-i} V_\smallplus \, .
\end{equation}
Resuming from \eqref{e.basement}, we have the following inclusions:
\begin{alignat}{2}
B^\mathsf{c} \setminus TB^\mathsf{c} 
&\subseteq TV_\smallminus \setminus D  \\ 
&\subseteq \bigcap_{i=1}^{N-1} T^{-i} V_\smallplus^\mathsf{c} &\quad&\text{(by \eqref{e.afortiori})} \\ 
&\subseteq \bigcap_{i=1}^{N-1} T^{-i} B^\mathsf{c} &\quad&\text{(since $V_\smallplus \subseteq C \subseteq B$),} 
\end{alignat}
proving that $B^\mathsf{c}$ is $N$-capturing. 

Condition \eqref{e.smallness} ensures that $B \subseteq U$. 
Finally, if $X$ is zero-dimensional, then $V$ can be chosen as a clopen set, and it follows that $B$ is clopen. 
\end{proof}

\begin{proof}[Proof of \cref{l.capturingtower} for arbitrary $\ell$]
Suppose $p$ is a periodic point of least period~$\ell$.
Reducing the neighborhood $U$ if necessary, we can assume that $U$ is disjoint from $TU,\dots,T^{\ell-1} U$.
Given $N>0$, let $m \coloneq \lceil N/\ell \rceil$.
Consider the following smaller neighborhood of $p$:
\begin{equation}\label{e.def_V}
V \coloneq \bigcap_{j=0}^m T^{-\ell j} U \, .
\end{equation}
Applying the previously proved version of the \lcnamecref{l.capturingtower} (with $T^\ell$, $V$, and $m$ in the respective places of $T$, $U$, and $N$), we obtain a neighborhood $B$ of $p$ such that $B \subseteq V$, both $B$ and $B^\mathsf{c}$ are $m$-capturing with respect to $T^\ell$, and $B$ is clopen if $\dim X = 0$.
Let $K \coloneq \bigsqcup_{i=0}^{\ell - 1} T^i B$.
Note that 
\begin{equation}\label{e.rainy_afternoon}
B \subseteq \bigcap_{i=0}^{\ell - 1} T^{-i} K \, .
\end{equation}
We will check that both sets $K$ and $K^\mathsf{c}$ are $\ell m$-capturing (and a fortiori $N$-capturing) with respect to $T$.
We have
\begin{equation}
K \setminus TK
= \bigcup_{i=0}^{\ell-1} T^i B \setminus \bigcup_{i=1}^{\ell} T^i B \\
\subseteq B \setminus T^\ell B 
\subseteq \bigcap_{j=0}^{m-1} T^{-\ell j} B \, ,
\end{equation}
since $B$ is $m$-capturing with respect $T^\ell$.
Using \eqref{e.rainy_afternoon}, we obtain 
\begin{equation}
K \setminus TK 
\subseteq \bigcap_{j=0}^{m-1} \bigcap_{i=0}^{\ell - 1} T^{-\ell j - i} K 
= \bigcap_{n=0}^{\ell m-1} T^{-n} K \, ,
\end{equation}
showing that $K$ is $\ell m$-capturing.

Assume for a contradiction that $K^\mathsf{c}$ is not $\ell m$-capturing, that is, 
there exists $x \in K^\mathsf{c} \setminus TK^\mathsf{c}$ such that $T^n x \in K$ for some $n \in \ldbrack 0, \ell m - 1 \rdbrack$.
Then
\begin{equation}
x \in TK \setminus K 
= \bigcup_{i=1}^{\ell} T^i B \setminus \bigcup_{i=0}^{\ell-1} T^i B \\
\subseteq T^\ell B \setminus B \, .
\end{equation}
Use Euclidean division and write $n = \ell k + r$ with $k \in \ldbrack 0, m - 1 \rdbrack$ and $r \in \ldbrack 0, \ell - 1 \rdbrack$. 
Then,
\begin{equation}
T^n x 
\in T^{n+\ell} B 
\subseteq T^{n+\ell} V
= T^{\ell(k+1)+r} V
\subseteq T^r U \, ,
\end{equation}
by \eqref{e.def_V}.
The point $T^n x$ belongs to $K$, that is, it belongs to one of the sets $B , \dots, T^{\ell - 1} B$, which are respectively contained in the disjoint sets $U, \dots, T^{\ell-1} U$.
Hence the point $T^n x$ must belong to $T^r B$. That is, $T^{\ell k} x \in B$. However, $x \in B^\mathsf{c}$, therefore contradicting the fact that $B^\mathsf{c}$ is $m$-capturing with respect to $T^\ell$.
\end{proof}

\section{Castles with capturing towers} \label{s.dungeon}

Our new tower lemma announced in the introduction is stated as follows (where the capturing property is as in \cref{def.capturing}):

\begin{theorem}\label{t.dungeon}
Let $X$ be a compact metric space of zero topological dimension and let $T \colon X \to X$ be a homeomorphism.
Let $N>0$ and suppose that the set of periodic points of period less than $N$ is finite. 
Let $\delta>0$.
Then there exists a castle partition of $X$ into finitely many $N$-capturing towers whose floors are clopen sets of diameter less than~$\delta$.
\end{theorem}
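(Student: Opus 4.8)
The plan is to combine Downarowicz's high--castle lemma (\cref{l.highcastle}), which handles the part of $X$ away from the periodic orbits, with \cref{l.capturingtower}, which handles a neighbourhood of each periodic orbit, and to fuse the two pieces into a single castle partition by means of a Kakutani--Rokhlin base (\cref{l.Kakutani}). By hypothesis the periodic points of period less than $N$ form finitely many orbits $O_1,\dots,O_r$, with $O_s$ of least period $\ell_s<N$. Using \cref{l.capturingtower} I enclose each $O_s$ in a clopen $N$--capturing tower $K_s=\bigsqcup_{i=0}^{\ell_s-1}T^iB_s$ whose complement is again $N$--capturing. Shrinking the neighbourhoods, I arrange simultaneously that every floor of every $K_s$ has diameter less than $\delta$ (uniform continuity); that the $K_s$ are pairwise disjoint; that no orbit passes from one $K_s$ to a different $K_t$ in fewer than $N$ iterates (possible since $T^iO_s\cap O_t=\emptyset$ for all $i$ and all $s\neq t$); and that the core $\Lambda_s\coloneqq\bigcap_{n\in\Z}T^{-n}K_s$ — the maximal invariant subset of $K_s$, which contains $O_s$ — lies, together with its orbit, inside the clopen neighbourhood $\bigcap_{i=0}^{N-1}T^{-i}K_s$ of itself. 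Put $Y\coloneqq X\setminus\bigsqcup_sK_s$; it is clopen, has no periodic point of period less than $N$, and, combining the capturing properties of the $K_s$ and of the $K_s^{\mathsf c}$ with the slow--transition property, one checks that $Y$ is itself an $N$--capturing set.

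Next, the aperiodic part. Since $Y$ is $N$--capturing, the clopen set $Y^-\coloneqq\bigcap_{i=0}^{N-1}T^{-i}Y$ still satisfies $\bigcup_{i=0}^{N-1}T^iY^-=Y$. Cover $Y^-$ by finitely many clopen sets $V_j\subseteq Y^-$ of diameter less than $\delta$ whose first $N$ iterates under $T$ are pairwise disjoint (possible since $Y$ has no periodic point of period less than $N$), chosen moreover to miss a fixed clopen time collar $\bigcup_{\lvert i\rvert\le N}T^iK_s$ of each $O_s$. Running the greedy recursion from the proof of \cref{l.highcastle} on the $V_j$ produces a clopen base $B^\ast\subseteq Y^-$ with $B^\ast\cap T^nB^\ast=\emptyset$ for $0<n<N$ and with $\bigcup_{n\in\Z}T^nB^\ast\supseteq Y$. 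The point of situating $B^\ast$ inside $Y^-$ rather than merely inside $Y$ is that the first $N-1$ forward iterates of every point of $B^\ast$ remain in $Y$, hence avoid all the bases $B_s$.

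Now fuse: set $B\coloneqq B^\ast\sqcup\bigsqcup_sB_s$, a clopen set. It is a feedback set — a two--sided orbit that ever meets $Y$ meets $B^\ast$, while an orbit confined forever to $\bigsqcup_sK_s$ either moves between distinct $K_s$'s through their entrance floors (contained in the $B_t$'s) or is eventually trapped in a single $K_s$ and then meets $B_s$ periodically; now invoke \cref{l.open_feedback}. \Cref{l.Kakutani} yields a clopen castle partition of $X$ based at $B$, and it remains to verify that each of its towers is $N$--capturing. Towers of height at least $N$ are automatically so. A tower of height $h<N$ must have its base inside $\bigsqcup_sB_s$, because no point of $B^\ast$ can return to $B$ in fewer than $N$ steps: it cannot return to $B^\ast$ by the $N$--separation, and it cannot reach any $B_t$ by the previous paragraph. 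For such a short tower one then uses, on the one hand, that $B_s$ is $\lceil N/\ell_s\rceil$--capturing with respect to $T^{\ell_s}$ — the precise output of \cref{l.capturingtower} — so that an orbit entering $K_s$ freshly circulates through it at least $N$ time units, and, on the other hand, the choice above which keeps the core $\Lambda_s$ and nearby orbits trapped in $K_s$ for at least $N$ steps; together these should show that every orbit entering the short tower spends at least $N$ consecutive time units inside it. Finally, subdividing the bases of the tall towers into sufficiently fine clopen pieces preserves their heights, hence the $N$--capturing property, and (the heights being bounded) shrinks all their floors below diameter $\delta$ by uniform continuity.

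The main obstacle is precisely this last verification: the Kakutani--Rokhlin first--return structure over the fused base $B$ does not respect the towers $K_s$, so one must both rule out spurious short towers and show that the short towers that genuinely occur inherit enough capturing from $B_s$. This is what forces the several smallness requirements on the $K_s$ — their pairwise separation in time, the time collars disjoint from $B^\ast$, and the trapping of the cores $\Lambda_s$ — together with the placement of $B^\ast$ deep inside $Y$. I expect the bookkeeping around the cores $\Lambda_s$, where $T|_{\Lambda_s}$ still carries low--period periodic points and $\Lambda_s$ is closed but not open, to be the most delicate point, and it is conceivable that a cleaner argument avoids the $K_s$-versus-$Y$ dichotomy altogether by building the entire castle in a single recursive sweep.
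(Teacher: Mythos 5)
Your overall strategy --- capturing towers around the finitely many low-period orbits via \cref{l.capturingtower}, a Downarowicz-style base for the aperiodic remainder, and a Kakutani--Rokhlin fuse via \cref{l.Kakutani} --- is the same as the paper's, but the fuse has a genuine gap, exactly at the point you flag. With base $B=B^*\sqcup\bigsqcup_s B_s$, the short towers of the resulting castle are \emph{not} the sets $K_s$: a point $x\in B_s$ has first return time $\ell_s$ precisely when $T^{\ell_s}x\in B_s$, whereas the points of $B_s\setminus T^{-\ell_s}B_s$ (the last, exiting loop of each sojourn in $K_s$) have return time at least $N$ and get absorbed into tall towers. So the height-$\ell_s$ tower is only the ``returning part'' $K_s'=\bigsqcup_{i=0}^{\ell_s-1}T^i\bigl(B_s\cap T^{-\ell_s}B_s\bigr)$, and this set need not be $N$-capturing. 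What \cref{l.capturingtower} guarantees is that $B_s$ is $m_s$-capturing for $T^{\ell_s}$ with $m_s=\lceil N/\ell_s\rceil$, so an orbit entering $K_s$ makes at least $m_s$ visits to $B_s$ and spends at least $m_s\ell_s\ge N$ steps in $K_s$; but its stay in $K_s'$ excludes the final loop and is only guaranteed to last $(m_s-1)\ell_s<N$ steps. An orbit that winds around $\mathcal{O}_s$ exactly $m_s$ times before leaving (e.g.\ in a full shift, a point of the form $\dots1\,0^{N}\,1\dots$ near the fixed point $0^\infty$, where $\ell_s=1$ and the stay in $K_s'$ lasts $N-1$ steps) is compatible with every smallness requirement you impose, so your short towers genuinely fail to be $N$-capturing. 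Your two extra precautions do not help: the condition on the cores $\Lambda_s$ is automatic (an invariant subset of $K_s$ lies in $\bigcap_{i=0}^{N-1}T^{-i}K_s$ for free) and says nothing about nearby non-invariant orbits; and the requirement that the $V_j$ cover $Y^-$ while avoiding the collars $\bigcup_{|i|\le N}T^iK_s$ is contradictory as written, since $Y^-$ meets $T(K_s)$ (this is repairable: cover $Y^-$ minus the collars; orbits missed by $B^*$ then enter some $K_t$ through $B_t$, so $B$ remains a feedback set).

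The missing device is the paper's: put the exit set $T(K)\setminus K$, where $K=\bigsqcup_sK_s$, into the Kakutani--Rokhlin base. Then every point of $B_s$ has first return time exactly $\ell_s$ --- its $\ell_s$-th iterate lies either in $K$, hence in some base $B_t$, or in $T(K)\setminus K$ --- so after slicing, the whole $N$-capturing set $K_s$, exit loops included, is a single tower of the castle; and points of $T(K)\setminus K$ have return time at least $N$, by the $N$-capturing property of $K^{\mathsf{c}}$ together with (in your setup) the collar condition keeping them away from $B^*$, or (in the paper) a forced full traverse of the tall auxiliary castle $L$ built over the maximal invariant set of $K^{\mathsf{c}}$. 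With this modification --- base $B^*\sqcup\bigsqcup_sB_s\sqcup\bigl(T(K)\setminus K\bigr)$ and the repaired collar condition --- your treatment of the aperiodic part (a greedy base inside $Y^-$ instead of the paper's maximal invariant set plus enlarged castle) does appear to go through; without it, the final verification fails as explained.
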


\begin{proof}
If $T$ has no periodic points of period less than $N$, then by \cref{l.highcastle} there exists a partition of $X$ into finitely many towers of height at least $N$ whose floors are clopen sets. These towers are automatically $N$-capturing. Furthermore, we can slice the towers so that each floor has diameter less than $\delta$. This proves \cref{t.dungeon} in this particular case. 

Next, assume that $T$ has some periodic orbits of period less than $N$, but only finitely many of them: say $\mathcal{O}_1,\dots,\mathcal{O}_s$ (all distinct), with corresponding least periods $\ell_1,\dots,\ell_s$.
Let 
\begin{equation}\label{e.def_delta0}
\delta_0 \coloneq \min_{i\neq j} d(\mathcal{O}_i,\mathcal{O}_j) \, .
\end{equation}
(If $s=1$, then $\delta_0 \coloneq \infty$.)
Reducing $\delta$ if necessary, we can assume that $\delta < \delta_0/2$ and 
\begin{equation}\label{e.unif_cont}
\forall x,y \in X, \text{ if } d(x,y) < \delta, \text{ then }
\min_{n \in\ldbrack 1,N \rdbrack} d(T^n x, T^n y) < \frac{\delta_0}{2} \, .
\end{equation}
Applying \cref{l.capturingtower}, for each $i$ we can find a clopen set $B_i$ such that the tower
\begin{equation}\label{e.Ki}
K_i \coloneq B_i \sqcup TB_i \sqcup \cdots \sqcup T^{\ell_i - 1} B_i 
\end{equation}
contains $\mathcal{O}_i$, is $N$-capturing, and its complement $K_i^\mathsf{c}$ is also $N$-capturing.
Furthermore, $\diam B_i$ can be chosen sufficiently small so that each floor of $K_i$ has diameter less than $\delta$.
In particular, the set $K_i$ is contained in the $\delta_0/2$-neighborhood of the orbit $\mathcal{O}_i$. 
It follows from \eqref{e.def_delta0} that the sets $K_1,\dots,K_s$ are pairwise disjoint. 
Let 
\begin{equation}
K \coloneq \bigsqcup_{i=1}^s K_i \quad \text{and} \quad 
B \coloneq \bigsqcup_{i=1}^s B_i \, .
\end{equation}
So, $K$ is a castle with base $B$.
The set $K$ is $N$-capturing, since each of the towers~$K_i$ is $N$-capturing.

We claim that the complement set $K^\mathsf{c}$ is also $N$-capturing.
Indeed,	assume this assertion is false, that is, there exists a point $x \in K$ such that $Tx \not\in K$ and $T^n x \in K$ for some $n \in \ldbrack 2, N \rdbrack$.
Let $i$ and $j$ be such that $x \in K_i$ and $T^n x \in K_j$.
Since $K_i^\mathsf{c}$ is $N$-capturing, we must have $j \neq i$.
We have $d(x,\mathcal{O}_i) < \delta$ and so \eqref{e.unif_cont} ensures that $d(T^n x,\mathcal{O}_i) < \delta_0/2$. Furthermore, $d(T^n x,\mathcal{O}_j) < \delta <\delta_0/2$, so the triangle inequality yields $d(\mathcal{O}_i,\mathcal{O}_j)< \delta_0$, which contradicting \eqref{e.def_delta0}. 
This proves our claim: the set $K^\mathsf{c}$ is $N$-capturing.

Let $M$ be the maximal invariant set of $K^\mathsf{c}$, i.e.,
\begin{equation}
M \coloneq \bigcap_{n\in \Z} T^{-n}(K^\mathsf{c}) \, .
\end{equation}
Then $M$ is a (possibly empty) closed set and the restricted map $T|_M$ has no periodic points of period less than $N$. 
By \cref{l.highcastle}, $M$ admits a finite castle partition
\begin{equation}\label{e.castle_M}
M = \bigsqcup_{j=1}^t \bigsqcup_{n=0}^{m_j-1} T^n(D_j) \, ,
\end{equation}
into towers of heights $m_j \ge N$ whose bases $D_j$ are clopen sets in the relative topology of $M$.

Next, we enlarge the castle \eqref{e.castle_M}:
for each $i \in \ldbrack 1,t \rdbrack$, let $C_j$ be a clopen neighborhood of the compact set $D_j$. Choosing those neighborhoods small enough,
the following union is disjoint:
\begin{equation}\label{e.castle_L}
\bigsqcup_{j=1}^t \bigsqcup_{n=0}^{m_j-1} T^n(C_j) \eqcolon L \, ;
\end{equation}
furthermore, the resulting castle $L$ is disjoint from $K$.
Let $C \coloneq \bigsqcup_{j=1}^t C_j$ be the base of the castle $L$ (see \cref{fig.Wolfenstein}).

\begin{figure}
	\includegraphics{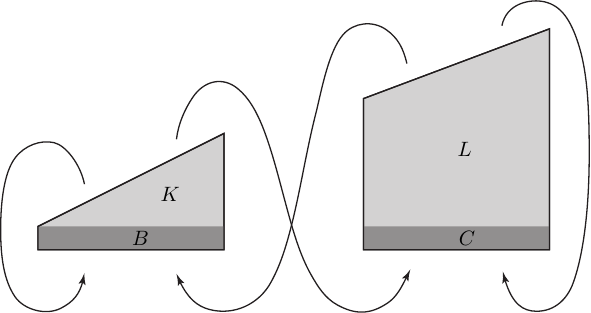}
	\caption{The two castles $K$ and $L$. The arrows denote possible excursions outside of $K\sqcup L$. Since the set $K^\mathsf{c}$ is $N$-capturing, excursions following the leftmost arrow take time at least $N$.}\label{fig.Wolfenstein}
\end{figure}

We claim that $B \sqcup C$ is a feedback set. 
By \cref{l.open_feedback} it is sufficient to check the (two-sided) orbit of every point $x \in X$ hits $B \sqcup C$. If $x \not \in M$, then, by definition of the set $M$, the orbit of $x$ hits $K$ and hence hits $B$. On the other hand, if $x \in M$, then the orbit of $x$ never hits the invariant castle $M$, and so it hits its base, which is contained in $C$. 
This proves our claim; actually, this argument shows that $B \sqcup (C \cap T(L))$ is also a feedback set.

Define another clopen set: 
\begin{equation}\label{e.simpler_base}
E \coloneq B \sqcup (T(L) \cap C) \sqcup (T(K) \setminus K) \, . 
\end{equation}
Then $E$ is a feedback set, since it contains the feedback set $B \sqcup (C \cap T(L))$.
Let $Q$ be the Kakutani--Rokhlin castle with base $E$ given by \cref{l.Kakutani}; this is a partition of the whole space $X$ into finitely many towers with clopen floors. 
Note that each tower $K_i$ of the castle $K$ is contained in a tower of the same height $\ell_i$ of the new castle $Q$; indeed, if $x$ is in the base $B_i$ of the tower $K_i$, then $T^{\ell_i}(x)$ belongs to $B \sqcup (T(K) \setminus K)$ and hence to $E$, the base of $Q$.  

Therefore we can refine the castle $Q$ (by slicing the towers that intersect $K$) and obtain a castle $Q'$ with the same base $E$ such that every tower of $K$ is also a tower of $Q'$. Recall that each of these towers have heights less than $N$, are $N$-capturing, and have floors of diameters less than $\delta$. By further slicing of towers, we can assume that all floors of all towers of $Q'$ have diameter less than $\delta$.

To complete the proof of the \lcnamecref{t.dungeon}, it remains to check that each tower of $Q'$ which is not a tower of $K$ has height at least $N$. Equivalently, for each point $x$ in the set $E \setminus B = (T(L) \cap C) \sqcup (T(K) \setminus K)$, the segment of orbit $Tx,T^2 x,\dots,T^{N-1} x$ does not hit $E$.

First, consider the case where $x \in T(L) \cap C$. Then the segment of orbit above is contained in $L \setminus C$ and thus in $E^\mathsf{c}$ (note that $(T(K) \setminus K) \cap L \subseteq L \setminus T(L) \subseteq C$). 

Next, consider the remaining case where $x \in T(K) \setminus K$. Consider the return time to $E$, that is, the least positive $n>0$ such that $T^n x \in E$. The point $T^n x$ cannot belong to $T(K) \setminus K$, otherwise there would be an earlier hit to $B \subseteq E$. Therefore, $T^n x \in B \sqcup (T(L) \cap C)$. If $T^n x$ is in $B$, then the fact that the set $K^\mathsf{c}$ is $N$-capturing tells us that $n \ge N$, as desired. If, on the other hand, $T^n x$ is in $T(L) \cap C$, then the point $T^{n-1} x$ is in the top floor of a tower of $L$, and since $x \not\in L$, the segment of orbit $Tx,T^2 x,\dots,T^{n-1} x$ must include a full traverse of that tower. In particular, $n$ is bigger than the height of the tower, which is at least $N$. 

This completes the proof of the \lcnamecref{t.dungeon}.
\end{proof}

\section{Eradicating quasiconformal orbits}\label{s.eradicate}

In this section, we use the previous results about castles to prove our main goal, \cref{t.main}. 
We need a few preliminaries in linear algebra.

The \emph{spectral radius} and the \emph{eigenvalue condition number} of a square matrix $A$ are defined respectively as:
\begin{align}
\label{e.def_rho}
\rho(A) &\coloneq \max \{|\lambda| \st \lambda \in \C \text{ is an eigenvalue of $A$}\} \, , \\ 
\label{e.def_kappa_e}
\kappa_\mathrm{e}(A) &\coloneq \rho(A) \rho(A^{-1}) \, .
\end{align}
Note that 
$\rho(A^n) = \rho(A)^n$ for every $n > 0$ and 
$\kappa_\mathrm{e}(A^n) = \kappa_\mathrm{e}(A)^n$ for every $n \neq 0$.

\begin{lemma}\label{l.eigenval_perturb}
Let $\mathbb{F}$ be either $\C$ or $\R$ and let $d \ge 2$.
Assume that $d\ge 3$ if $\mathbb{F} = \R$.
Let $A_0, \dots, A_{\ell-1}$ be a finite list of matrices in $\mathrm{GL}(d,\mathbb{F})$.
Then, for every $\epsilon>0$, there exists $\tilde A_0, \dots, \tilde A_{\ell-1} \in \mathrm{GL}(d,\mathbb{F})$ such that $\|\tilde{A}_j - A_j \| \le \epsilon \|A_j\|$ for each $j \in \ldbrack 0, \ell - 1 \rdbrack$ and 
\begin{equation}\label{e.big_kappa_e}
\kappa_\mathrm{e} \big( \tilde{A}_{\ell-1} \cdots \tilde{A}_0 \big) \ge (1+\epsilon)^\ell \, .
\end{equation}
\end{lemma}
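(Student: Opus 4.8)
\section*{Proof proposal}

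\textbf{Strategy.} The plan is to reduce, by an orthogonal/unitary change of basis applied cyclically along the list, to the case in which every $A_j$ is block-upper-triangular for one common partition of the coordinates into blocks of size $1$ (over $\C$) or size $\le 2$ (over $\R$), and then to perturb by a completely explicit move: scale the \emph{top-left diagonal block} of each factor by the scalar $1+\epsilon$. First note that pre-conjugation is harmless: if $U_0,\dots,U_{\ell-1}$ are orthogonal/unitary (set $U_\ell:=U_0$) and $\tilde R_0,\dots,\tilde R_{\ell-1}$ fulfil the conclusion for the list $R_j := U_{j+1}^* A_j U_j$, then $\tilde A_j := U_{j+1}\tilde R_j U_j^*$ fulfils it for $(A_j)$, since orthogonal conjugation preserves operator norms and $\tilde A_{\ell-1}\cdots\tilde A_0 = U_0(\tilde R_{\ell-1}\cdots\tilde R_0)U_0^*$ is conjugate to $\tilde R_{\ell-1}\cdots\tilde R_0$, hence has the same eigenvalues. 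So it suffices to choose the $U_j$ well. I would invoke the periodic Schur decomposition, proved in a few lines by the usual deflation: let $P := A_{\ell-1}\cdots A_0$; choose a one-dimensional eigenspace of $P$ of modulus $\rho(P)$ if one exists, and otherwise (only possible over $\R$) a two-dimensional invariant subspace of $P$ corresponding to a conjugate pair of modulus $\rho(P)$; push it forward by $A_0$, then $A_1$, etc., obtaining a cyclic family of subspaces of the same dimension; adapt orthonormal bases $U_j$ to them; and recurse on the quotient data. The outcome is that we may assume every $A_j$ is block-upper-triangular for a common partition $I_1<I_2<\dots<I_r$ of $\ldbrack 1,d\rdbrack$ with $|I_k|\le 2$ (and $|I_k|=1$ over $\C$), and with the top block $I_1$ chosen so that the spectrum of the $|I_1|\times|I_1|$ product $B_{\ell-1}\cdots B_0$, where $B_j := (A_j)_{I_1 I_1}$, is an eigenvalue of $P$ of modulus $\rho(P)$ (a real eigenvalue if $|I_1|=1$, a conjugate pair if $|I_1|=2$). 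In particular $|I_1|=1$ whenever $P$ has a real eigenvalue of modulus $\rho(P)$, so $|I_1|=2$ only over $\R$, in which case $d\ge 3$ forces $r\ge 2$.

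\textbf{The perturbation.} Since $\mathrm{span}$ of the first $|I_1|$ coordinate vectors is $A_j$-invariant, $B_j$ is the restriction of $A_j$ to this subspace, so $\|B_j\|\le\|A_j\|$. Let $\tilde A_j$ be $A_j$ with its top-left diagonal block $B_j$ replaced by $(1+\epsilon)B_j$. Then $\tilde A_j - A_j$ is supported on that block, so $\|\tilde A_j-A_j\| = \epsilon\|B_j\| \le \epsilon\|A_j\|$; and $\tilde A_j$ is invertible, being block-triangular with invertible diagonal blocks.

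\textbf{Computation.} Block-upper-triangular matrices for the partition $(I_k)$ form an algebra on which extracting the $(I_k,I_k)$ diagonal block is a homomorphism, so $\tilde P := \tilde A_{\ell-1}\cdots\tilde A_0$ is block-upper-triangular with top-left block $(1+\epsilon)^\ell\,B_{\ell-1}\cdots B_0$ and all other diagonal blocks equal to those of $P$. Hence the spectrum of $\tilde P$ is the spectrum of $P$ restricted to $I_2,\dots,I_r$, together with $(1+\epsilon)^\ell$ times the modulus-$\rho(P)$ eigenvalue sitting in $I_1$. Therefore $\rho(\tilde P) = (1+\epsilon)^\ell\rho(P)$. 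For the smallest eigenvalue modulus: if $\kappa_\mathrm{e}(P)>1$ then an eigenvalue of $P$ of modulus $<\rho(P)$ lies outside $I_1$ and survives in $\tilde P$, giving $\kappa_\mathrm{e}(\tilde P) = (1+\epsilon)^\ell\kappa_\mathrm{e}(P)$; if $\kappa_\mathrm{e}(P)=1$ then all eigenvalues of $P$ have modulus $\rho(P)$, and since $r\ge 2$ — which holds because $d\ge 2$ when $|I_1|=1$ and $d\ge 3$ when $|I_1|=2$ — at least one such eigenvalue survives unchanged in $\tilde P$, giving $\kappa_\mathrm{e}(\tilde P) = (1+\epsilon)^\ell$. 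In either case $\kappa_\mathrm{e}(\tilde P) = (1+\epsilon)^\ell\max\{1,\kappa_\mathrm{e}(P)\} \ge (1+\epsilon)^\ell$, as required.

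\textbf{Main obstacle.} The only non-elementary input is the periodic Schur decomposition; beyond that, the delicate point is the case $|I_1|=2$ over $\R$, which is forced precisely when $\rho(P)$ is realized only by complex eigenvalues. Scaling a $2\times 2$ block by $1+\epsilon$ multiplies \emph{both} of its eigenvalue moduli by $1+\epsilon$, so it cannot by itself separate moduli; one needs an untouched diagonal block elsewhere to pin down $\rho(\tilde P^{-1})$, and this is exactly what the hypothesis $d\ge 3$ guarantees. (This mirrors the well-known fact that existence of conformal behavior is $C^0$-robust in $\mathrm{GL}(2,\R)$.) I would include the short deflation proof of the periodic Schur decomposition to keep the note self-contained.
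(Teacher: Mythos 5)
Your proof is correct and follows essentially the same route as the paper: conjugate the factors cyclically by orthogonal/unitary matrices adapted to the forward images $A_{j-1}\cdots A_0(V_0)$ of an invariant subspace $V_0$ of the full product, obtain a common block-triangular form, and multiply one diagonal block of each factor by $1+\epsilon$, so that the eigenvalue gap grows by $(1+\epsilon)^\ell$ while each relative norm change is at most $\epsilon$. The only differences are cosmetic: the paper isolates the eigenspace of the \emph{smallest}-modulus eigenvalue(s) (of dimension $\nu\in\{1,2\}$) and scales the complementary $(d-\nu)$-dimensional block, which makes the conclusion immediate and avoids your case split on whether $\kappa_\mathrm{e}$ of the product equals $1$, and it uses only this single deflation step rather than the full periodic Schur decomposition you invoke (your computation in fact only uses the top block versus the rest).
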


\begin{proof}
Let $\lambda_1,\dots,\lambda_d$ be the eigenvalues of the matrix $A_{\ell-1} \cdots A_0$, ordered so that $|\lambda_1| \ge \cdots \ge |\lambda_d|$ and furthermore $\lambda_{d-1} = \overline{\lambda_d}$ in the case that $\mathbb{F} = \mathbb{R}$ and $\lambda_d \in \C \setminus \R$.
If $\mathbb{F} = \mathbb{R}$ and $\lambda_d \in \C \setminus \R$, let $\nu \coloneq 2$, otherwise let $\nu \coloneq 1$.
Let $V_0 \subset \mathbb{F}^d$ be an $\nu$-dimensional eigenspace of $A_{\ell-1} \cdots A_0$ corresponding to the eigenvalue $\lambda_d$ if $\nu=1$, or to the eigenvalue pair $\lambda_{d-1} , \lambda_d$ if $\nu=2$.
For each $j \in \ldbrack 1, \ell-1 \rdbrack$, define the $\nu$-dimensional subspace
$V_i \coloneq A_{i-1} \cdots A_0 (V_0)$. 
Let $\{e_1,\dots,e_d\}$ denote the canonical basis of $\mathbb{F}^d$.
For each $j \in \ldbrack 0, \ell-1 \rdbrack$, take an orthogonal (if $\mathbb{F} = \R$) or unitary (if $\mathbb{F}=\C$) matrix $R_j$ such that $R_j V_j$ equals $\mathrm{span}(e_d)$ if $\nu=1$ and $\mathrm{span}(e_{d-1},e_d)$ if $\nu=2$.
Let $R_\ell \coloneq R_0$.
For each $j \in \ldbrack 0, \ell-1 \rdbrack$, let $\Delta_j \coloneq R_{j+1} A_j R_j^{-1}$. 
Then $\Delta_j$ is a block triangular matrix:
\begin{equation}
\Delta_j \eqcolon
\begin{pmatrix} 
	B_j	&	0	\\
	T_j	&	C_j
\end{pmatrix} \, , \quad \text{where $C_j$ is a $\nu \times \nu$ matrix.}
\end{equation}
Multiplying these matrices, we get another block diagonal matrix
\begin{equation}
\Delta_{\ell-1} \cdots \Delta_0 = 
\begin{pmatrix} 
	B_{\ell-1} \cdots B_0	&	0	\\
	*						&	C_{\ell-1} \cdots C_0
\end{pmatrix} 
\end{equation}
Since $\Delta_{\ell-1} \cdots \Delta_0 = R_0 A_{\ell-1} \cdots A_0 R_0^{-1}$, the submatrix $B_{\ell-1} \cdots B_0$ has eigenvalues $\lambda_1,\dots,\lambda_{d-\nu}$, while the submatrix $C_{\ell-1} \cdots C_0$ has eigenvalues $\lambda_d$ if $\nu=1$ or $\lambda_{d-1},\lambda_d$ if $\nu=2$.

For each $j \in \ldbrack 0, \ell-1 \rdbrack$, define matrices
\begin{equation}\label{e.epsilon_perturb}
\tilde{\Delta}_j \coloneq 
\begin{pmatrix} 
	(1+\epsilon) B_j	&	0	\\
	T_j					&	C_j
\end{pmatrix}
\quad\text{and}\quad
\tilde{A}_j \coloneq R_{j+1}^{-1} \tilde{\Delta}_j R_j \, .
\end{equation}
Note that
\begin{equation}\label{e.norm_bound}
\|\tilde{A}_j - A_j \| =
\|\tilde{\Delta}_j - \Delta_j \| =
\left\| \left(\begin{smallmatrix} \epsilon I_{d-\nu} & 0 \\ 0 & 0 \end{smallmatrix} \right) \Delta_j \right\| \le
\epsilon \|\Delta_j\| = 
\epsilon \|A_j\| \, .
\end{equation}
Multiplying the block-triangular matrices $\tilde{\Delta}_j$, we obtain
\begin{equation}
\tilde{\Delta}_{\ell-1} \cdots \tilde{\Delta}_0 = 
\begin{pmatrix} 
	(1+\epsilon)^\ell B_{\ell-1} \cdots B_0	&	0	\\
	*										&	C_{\ell-1} \cdots C_0
\end{pmatrix} \, . 
\end{equation}
This matrix has eigenvalues 
\begin{alignat}{2}
&(1+\epsilon)^\ell \lambda_1 , \ \dots, \ (1+\epsilon)^\ell \lambda_{d-1}, \ \lambda_d &\quad &\text{if $\nu=1$, or } \\ 
&(1+\epsilon)^\ell \lambda_1 , \ \dots, \ (1+\epsilon)^\ell \lambda_{d-2}, \ \lambda_{d-1}, \ \lambda_d &\quad &\text{if $\nu=2$.}
\end{alignat}
In any case, 
\begin{equation}
\kappa_{\mathrm{e}} \big( \tilde{\Delta}_{\ell-1} \cdots \tilde{\Delta}_0 \big) = \frac{(1+\epsilon)^\ell |\lambda_1|}{|\lambda_d|} \ge (1+\epsilon)^\ell \, .
\end{equation}
Since the matrix $\tilde{\Delta}_{\ell-1} \cdots \tilde{\Delta}_0$ is similar (i.e.\ conjugate) to $\tilde{A}_{\ell-1} \cdots \tilde{A}_0$, inequality \eqref{e.big_kappa_e} follows. 
\end{proof}

Recall that the condition number of an invertible $d \times d$ real or complex matrix $A$ was defined in \eqref{e.def_kappa} as $\kappa(A) \coloneq \|A\| \, \|A^{-1}\| $, where $\| \mathord{\cdot} \|$ denotes the Euclidean-induced operator norm. 
The norm and the condition number can be compared to the spectral radius \eqref{e.def_rho} and to the eigenvalue condition number \eqref{e.def_kappa_e} as follows:
\begin{equation}
\|A\| \ge \rho(A) \, , \quad 
\kappa(A) \ge \kappa_{\mathrm{e}}(A) \, .
\end{equation}
Since the norm is sub-multiplicative, so is the condition number:
\begin{equation}\label{e.submult}
	\kappa(AB) \le \kappa(A) \kappa(B) \, .
\end{equation}
In terms of the singular values $\sigma_1(A) \ge \cdots \ge \sigma_d(A)$ of the matrix $A$, we have $\|A\|=\sigma_1(A)$ and $\kappa(A) = \sigma_1(A) / \sigma_d(A)$.

\begin{lemma}\label{l.sing_val_perturb}
Let $\mathbb{F}$ be either $\C$ or $\R$, let $d \ge 2$, and let $A_0, \dots, A_{\ell-1}$ be a finite list of matrices in $\mathrm{GL}(d,\mathbb{F})$.
Then, for every $\epsilon>0$, there exists $\tilde A_0, \dots, \tilde A_{\ell-1} \in \mathrm{GL}(d,\mathbb{F})$ such that $\|\tilde{A}_j - A_j \| \le \epsilon \|A_j\|$ for each $j \in \ldbrack 0, \ell - 1 \rdbrack$ and 
\begin{equation}\label{e.big_kappa}
\kappa \big( \tilde{A}_{\ell-1} \cdots \tilde{A}_0 \big) \ge (1+\epsilon)^\ell \, .
\end{equation}
\end{lemma}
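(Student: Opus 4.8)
The plan is to deduce \cref{l.sing_val_perturb} from \cref{l.eigenval_perturb}, handling the extra case $\mathbb{F} = \R$, $d = 2$ separately, since that is the only configuration excluded from \cref{l.eigenval_perturb}. In all other cases, the hypotheses of \cref{l.eigenval_perturb} are met, so we obtain $\tilde{A}_0, \dots, \tilde{A}_{\ell-1}$ with $\|\tilde{A}_j - A_j\| \le \epsilon \|A_j\|$ and $\kappa_\mathrm{e}(\tilde{A}_{\ell-1} \cdots \tilde{A}_0) \ge (1+\epsilon)^\ell$; since $\kappa(A) \ge \kappa_\mathrm{e}(A)$ always, inequality \eqref{e.big_kappa} follows immediately. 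So the only real content is the case $\mathbb{F} = \R$, $d = 2$.

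For that case, the idea is that even though we cannot always create complex eigenvalues for the product by real perturbations without changing the norm behavior, we can still inflate the \emph{norm-based} condition number directly. First I would reduce to a single matrix: by the cocycle-style telescoping, it suffices to treat $\ell = 1$, because we may perturb only $A_{\ell-1}$, leaving the others fixed, and apply the $\ell=1$ statement to $A_{\ell-1} \cdots A_0$ with tolerance $\epsilon$ on the factor $A_{\ell-1}$ — wait, one must be careful: the bound required is $\|\tilde{A}_j - A_j\| \le \epsilon\|A_j\|$ for \emph{each} $j$, and $(1+\epsilon)^\ell$ on the right, so perturbing only one matrix gives only $(1+\epsilon)^1$, not $(1+\epsilon)^\ell$. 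Instead I would genuinely perturb all of them. Write $P = A_{\ell-1} \cdots A_0$ and conjugate by rotations $R_j$ so that, setting $\Delta_j = R_{j+1} A_j R_j^{-1}$ with $R_\ell = R_0$, the product $\Delta_{\ell-1} \cdots \Delta_0 = R_0^{-1} P R_0$ is upper triangular (real Schur form of $P$; choose $R_0$ so the Schur basis is orthonormal). Then replace each $\Delta_j$ by $\tilde{\Delta}_j = \mathrm{diag}(1+\epsilon, 1) \, \Delta_j$, so that $\|\tilde{\Delta}_j - \Delta_j\| \le \epsilon \|\Delta_j\| = \epsilon \|A_j\|$, exactly as in \eqref{e.norm_bound}. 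The product becomes $\tilde{\Delta}_{\ell-1} \cdots \tilde{\Delta}_0$, which is upper triangular with diagonal entries $(1+\epsilon)^\ell t_1$ and $(1+\epsilon)^\ell t_2$ where $t_1, t_2$ are the diagonal entries of the triangular matrix $\Delta_{\ell-1}\cdots\Delta_0$ — but this only scales the whole matrix, leaving $\kappa$ unchanged. That is not enough.

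So I need a better perturbation for $d = 2$, $\mathbb{F} = \R$. The right move is to use a \emph{shear} rather than a diagonal scaling, so that the $(1,2)$ or $(2,1)$ entry of the triangular product grows linearly in $\ell$, driving $\sigma_1/\sigma_2$ to infinity. Concretely, with $P$ put in real Schur form as above with diagonal entries $t_1, t_2$ and super-diagonal entry $u$, consider replacing $\Delta_j$ by $\tilde{\Delta}_j = U_j \Delta_j$ where $U_j = \left(\begin{smallmatrix} 1 & s_j \\ 0 & 1 \end{smallmatrix}\right)$; the norm cost is $\|\tilde{\Delta}_j - \Delta_j\| = \|(U_j - I)\Delta_j\| \le |s_j|\,\|\Delta_j\| = |s_j|\,\|A_j\|$, so taking $|s_j| \le \epsilon$ keeps us within tolerance (indeed within $\epsilon \|A_j\|$, and one can even arrange the sharper bound if needed via the $\|\tilde{A}_j - A_j\| + \|\tilde{A}_j^{-1} - A_j^{-1}\|$ metric, since shears are volume-preserving and well-conditioned for small $s_j$). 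The product $\tilde{\Delta}_{\ell-1}\cdots\tilde{\Delta}_0$ is upper triangular with diagonal $(t_1, t_2)$ unchanged (shears are unipotent upper triangular, and upper triangular matrices form a group with this diagonal behavior), but its super-diagonal entry accumulates a term of the form $\sum_j s_j \cdot (\text{products of diagonal blocks})$; choosing the signs of the $s_j$ coherently (all equal to $+\epsilon$ times an appropriate sign) makes this entry have absolute value at least growing like $c \ell \epsilon$ for a constant $c > 0$ depending on $P$ — hence $\kappa$ of the triangular product, which for a $2\times2$ upper triangular matrix $\left(\begin{smallmatrix} a & b \\ 0 & c \end{smallmatrix}\right)$ satisfies $\kappa \ge |b|/\sqrt{|ac|}$ roughly, tends to infinity, so in particular exceeds $(1+\epsilon)^\ell$ once $\ell$ is large; for small $\ell$ one checks the estimate directly. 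Since conjugation by the orthogonal $R_0$ preserves $\kappa$, \eqref{e.big_kappa} follows for $\tilde{A}_j = R_{j+1}^{-1}\tilde{\Delta}_j R_j$.

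The main obstacle I anticipate is the $\mathbb{F}=\R$, $d=2$ bookkeeping: one must verify that the accumulated off-diagonal entry can be made genuinely large (not accidentally cancelled) with a uniform-in-$\ell$ lower bound of the right order, and simultaneously keep each perturbation within the prescribed norm tolerance measured in the metric \eqref{e.def_metric} (which involves inverses); the shear's inverse is $\left(\begin{smallmatrix} 1 & -s_j \\ 0 & 1\end{smallmatrix}\right)$, so this is manageable but needs care. A cleaner alternative, which I would present if it goes through, is: first apply an arbitrarily small perturbation to make $P = \tilde A_{\ell-1}\cdots\tilde A_0$ (after an initial negligible step) have distinct real eigenvalues, i.e.\ be $\R$-diagonalizable with a nontrivial eigenbasis angle $\theta \ne \pi/2$; then iterate the eigenvalue-stretching of \cref{l.eigenval_perturb} to make the larger eigenvalue dominate, which forces $\sigma_1(P^{?})$... but since $\ell$ is fixed we cannot iterate — so the shear argument above is the honest route. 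Either way, the case $d \ge 3$ or $\mathbb{F} = \C$ is a one-line consequence of \cref{l.eigenval_perturb} and $\kappa \ge \kappa_\mathrm{e}$, and that is where most readers' attention should go.
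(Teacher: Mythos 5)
Your reduction of the generic cases to \cref{l.eigenval_perturb} via $\kappa \ge \kappa_\mathrm{e}$ is correct, but the whole point of \cref{l.sing_val_perturb} is the one case that lemma excludes, $\mathbb{F}=\R$, $d=2$, and there your argument has genuine gaps. First, if $P=A_{\ell-1}\cdots A_0$ has non-real eigenvalues --- precisely the situation that forces $(\R,2)$ out of \cref{l.eigenval_perturb} --- there is no orthogonal conjugation making $R_0^{-1}PR_0$ upper triangular over $\R$ (the real Schur form is then a single $2\times 2$ block), and no arbitrarily small perturbation can make the eigenvalues real, since a non-real conjugate pair persists under small perturbations; so the setup of your shear argument fails exactly in the interesting sub-case. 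Second, even when $R_0^{-1}PR_0$ is triangular, the individual factors $\Delta_j=R_{j+1}A_jR_j^{-1}$ are not triangular (only their product is), so after replacing $\Delta_j$ by $U_j\Delta_j$ there is no reason the new product remains upper triangular with unchanged diagonal and an additively accumulating corner entry; to triangularize every factor simultaneously you would need to propagate a $T$-invariant line of $P$ along the product, i.e.\ a real eigenvector, which brings you back to the first problem.

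Third, and decisively, the quantitative scheme cannot deliver \eqref{e.big_kappa} even where the bookkeeping could be arranged: $\ell$ is part of the data, so ``once $\ell$ is large'' is not an available move, and shears of size $|s_j|\le\epsilon$ produce only polynomial-in-$\ell$ growth of $\kappa$, whereas the target is exponential. Concretely, take every $A_j=I$ (real eigenvalues, so none of the previous objections apply), $\epsilon=1$, $\ell=10$: your construction yields $\left(\begin{smallmatrix} 1 & b \\ 0 & 1\end{smallmatrix}\right)$ with $|b|\le\ell\epsilon=10$, whose condition number is roughly $10^2$, far below $(1+\epsilon)^\ell=1024$; yet the lemma is true here (take $\tilde A_j=\mathrm{diag}(2,1)$). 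The mechanism must be a scaling, not a shear. That is what the paper does, with no case split and no eigenvalue information at all: take a unit vector $v_0$ realizing $\sigma_d(P)$, propagate $v_j \propto A_{j-1}\cdots A_0 v_0$, and choose orthogonal/unitary $R_j$ with $R_j v_j=e_d$, so that each $\Delta_j=R_{j+1}A_jR_j^{-1}$ has last column $(0,\dots,0,c_j)$ and, by the SVD of $P$, the product $\Delta_{\ell-1}\cdots\Delta_0$ is block diagonal with $|c_{\ell-1}\cdots c_0|=\sigma_d(P)\le\sigma_{d-1}(P)$. Multiplying the upper-left $(d-1)\times(d-1)$ block of each $\Delta_j$ by $1+\epsilon$ costs at most $\epsilon\|A_j\|$, keeps $\sigma_d$ of the product at most $|c_{\ell-1}\cdots c_0|$, and pushes $\sigma_{d-1}$ above $(1+\epsilon)^\ell|c_{\ell-1}\cdots c_0|$ (Courant--Fischer), which gives \eqref{e.big_kappa} uniformly in $d\ge 2$ and $\mathbb{F}\in\{\R,\C\}$. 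I recommend redoing the $(\R,2)$ case along these lines (or simply adopting the uniform singular-vector argument for all cases).
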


\begin{proof}
It is sufficient to consider the case where $\mathbb{F} = \R$ and $d = 2$, since the other cases follow immediately from \cref{l.eigenval_perturb}.
Let $v_0 \in \R^2$ be a unit vector such that $\|A_{\ell-1} \cdots A_0 v_0 \| = \sigma_2(A_{\ell-1} \cdots A_0)$.
For each $j \in \ldbrack 1, \ell \rdbrack$, define the unit vector
\begin{equation}
v_j \coloneq \frac{A_{i-1} \cdots A_0 v_0}{\|A_{i-1} \cdots A_0 v_0\|}
\end{equation}
and take an orthogonal matrix $R_j$ such that $R_j v_j$ equals $e_2 \coloneq (0,1)$.
For each $j \in \ldbrack 0, \ell-1 \rdbrack$, let $\Delta_j \coloneq R_{j+1} A_j R_j^{-1}$. 
Then $\Delta_j$ is a lower-triangular matrix:
\begin{equation}
\Delta_j \eqcolon
\begin{pmatrix} 
	b_j	&	0	\\
	t_j	&	c_j
\end{pmatrix} \, .
\end{equation}
Multiplying those matrices, we get a lower-triangular matrix $\Delta_{\ell-1} \cdots \Delta_0$, which equals $R_\ell A_{\ell-1} \cdots A_0 R_0^{-1}$ and so it has the same singular values as $A_{\ell-1} \cdots A_0$.
It follows that this matrix product is actually diagonal:
\begin{equation}
\Delta_{\ell-1} \cdots \Delta_0 = 
\begin{pmatrix} 
	b_{\ell-1} \cdots b_0	&	0	\\
	0						&	c_{\ell-1} \cdots c_0
\end{pmatrix} 
\end{equation}
with $|b_{\ell-1} \cdots b_0| = \sigma_1(A_{\ell-1} \cdots A_0)$ and $|c_{\ell-1} \cdots c_0| = 
\sigma_2(A_{\ell-1} \cdots A_0)$.

Analogously to \eqref{e.epsilon_perturb}, for each $j \in \ldbrack 0, \ell-1 \rdbrack$ we define matrices
\begin{equation}
\tilde{\Delta}_j \coloneq 
\begin{pmatrix} 
	(1+\epsilon) b_j	&	0	\\
	t_j					&	c_j
\end{pmatrix}
\quad\text{and}\quad
\tilde{A}_j \coloneq R_{j+1}^{-1} \tilde{\Delta}_j R_j \, .
\end{equation}
By the previous computation \eqref{e.norm_bound}, we have $\|\tilde{A}_j - A_j \| \le \epsilon \|A_j\|$.
Multiplying the lower-triangular matrices $\tilde{\Delta}_j$, we obtain
\begin{equation}
\tilde{\Delta}_{\ell-1} \cdots \tilde{\Delta}_0 = 
\begin{pmatrix} 
	(1+\epsilon)^\ell b_{\ell-1} \cdots b_0	&	0	\\
	*										&	c_{\ell-1} \cdots c_0
\end{pmatrix} \, . 
\end{equation}
Then 
\begin{alignat}{2}
\sigma_1 (\tilde{\Delta}_{\ell-1} \cdots \tilde{\Delta}_0) &\ge \| \tilde{\Delta}_{\ell-1} \cdots \tilde{\Delta}_0 e_1 \| &&\ge (1+\epsilon)^\ell  |b_{\ell-1} \cdots b_0| \, , \\
\sigma_2 (\tilde{\Delta}_{\ell-1} \cdots \tilde{\Delta}_0) &\le \| \tilde{\Delta}_{\ell-1} \cdots \tilde{\Delta}_0 e_2 \| &&= |c_{\ell-1} \cdots c_0| \, .
\end{alignat}
Therefore,
\begin{equation}
\kappa(\tilde{A}_{\ell-1} \cdots \tilde{A}_0) 
=  \kappa(\tilde{\Delta}_{\ell-1} \cdots \tilde{\Delta}_0) 
\ge \frac{(1+\epsilon)^\ell |b_{\ell-1} \cdots b_0|}{|c_{\ell-1} \cdots c_0|} 
\ge (1+\epsilon)^\ell \, . \qedhere
\end{equation}
\end{proof}

We are ready to prove our main result.

\begin{proof}[Proof of \cref{t.main}]
Assume $X$ is a zero-dimensional compact metric space and that $T \colon X \to X$ is a homeomorphism with finitely many periodic points of any given period. Let $\mathbb{F} = \C$ or $\R$. If $\mathbb{F} = \R$ and $T$ has periodic points, assume that $d \ge 3$; otherwise, assume that $d \ge 2$.
For each $M > 1$, let
\begin{equation}
\mathcal{U}_M \coloneq \left\{ F \in C^0(X, \mathrm{GL}(d,\mathbb{F})) \st \forall x \in X, \ \sup_{n \in \Z} \kappa(F^{(n)}(x)) > M \right\} . 
\end{equation}
A compactness argument shows that $\mathcal{U}_M$ is open in $C^0(X, \mathrm{GL}(d,\mathbb{F}))$.
We will establish that $\mathcal{U}_M$ is dense.
From this it will follow that the set $\mathcal{QC} = \bigcap_M \mathcal{U}_M^\mathsf{c}$ is meager, therefore completing the proof. 

Instead of working with the metric \eqref{e.def_metric}, we use instead the metric
\begin{equation}
	d'(F,G) \coloneq \sup_{x \in X} \|F(x)-G(x)\| \, ,
\end{equation}
which, despite not being complete, induces the same topology on $C^0(X, \mathrm{GL}(d,\mathbb{F}))$.

Fix $M>1$, $F \in C^0(X, \mathrm{GL}(d,\mathbb{F}))$, and $\epsilon > 0$. 
Let $C_0 \coloneq \sup_{x \in X} \|F(x)\|$.
Fix $N>0$ such that 
\begin{equation}\label{e.N}
(1+\epsilon)^N > M^2 \, . 
\end{equation}
Let $\delta>0$ be such that for all $x,y \in X$, if $d(x,y)<\delta$, then $\|F(x)-F(y)\|<\epsilon$.

By \cref{t.dungeon}, there exists a castle partition 
\begin{equation}
	X = \bigsqcup_{i=1}^s \bigsqcup_{j=0}^{\ell_i - 1} T^j B_i \, .
\end{equation}
where each tower $K_i \coloneq \bigsqcup_{j=0}^{\ell_i - 1} T^j B_i$ is $N$-capturing and each floor $T^j B_i$ is a clopen set of diameter $< \delta$.
It follows from the proof of \cref{t.dungeon} (first paragraph) that if $T$ has no periodic points, then $\ell_i \ge N$ for each $i$.

For each $i \in \ldbrack 1, s \rdbrack$ and $j \in \ldbrack 0, \ell_i-1 \rdbrack$, choose an arbitrary point $x_{i,j}$ in the set $T^i B_j$ and let $A_{i,j} \coloneq F(x_{i,j})$.
Using \cref{l.eigenval_perturb,l.sing_val_perturb}, we find matrices $\tilde{A}_{i,j}$ such that $\| \tilde{A}_{i,j} - A_{i,j}\| \le \epsilon \|A_{i,j}\|$ and
\begin{alignat}{2}
\kappa_\mathrm{e}(\tilde{A}_{i,\ell_i-1} \cdots \tilde{A}_{i,0}) &\ge (1+\epsilon)^{\ell_i} &\quad \text{if } \ell_i &< N \, , \\ 
\kappa           (\tilde{A}_{i,\ell_i-1} \cdots \tilde{A}_{i,0}) &\ge (1+\epsilon)^{\ell_i} &\quad \text{if } \ell_i &\ge N \, .
\end{alignat}
Note that if $\mathbb{F} = \R$ and $d = 2$, then \cref{l.eigenval_perturb} is not applicable; but in this case $T$ has no periodic points, so $\ell_i \ge N$ for all $i$ and only \cref{l.sing_val_perturb} is required.

Now define a function $G \colon X \to \mathrm{GL}(d,\mathbb{F})$ as constant equal to $\tilde{A}_{i,j}$ on the set $T^i(B)$.
Since those sets are clopen, $G$ is continuous. 
Let us check that $G$ is a perturbation of $F$.
Every point $x \in X$ belongs to a unique element $T^j(B_i)$ of the castle partition.
Then we can bound:
\begin{align}
\|G(x) - F(x) \| 
&\le \|\tilde{A}_{i,j} - A_{i,j}\| + \|F(x_{i,j}) - F(x)\| \\ 
&< C \epsilon + \epsilon \, .
\end{align}
Thus, $d'(G,F) < (C+1)\epsilon$.

The final task is checking that $G \in \mathcal{U}_M$, that is, for every  $x\in X$,
\begin{equation}\label{e.final}
\exists n \in \Z \text{ s.t.\ }
\kappa(G^{(n)}(x)) > M \, .
\end{equation}
So fix a point $x \in X$. 
Let $K_i$ be the tower containing $x$. 

As a first case, assume that the height $\ell_i$ of the tower is at least $N$.
We have $x = T^j y$ for some $y \in B_i$ and $j \in \ldbrack 0, \ell_i -1 \rdbrack$.
Then $\kappa(G^{(\ell_i)}(y)) \ge (1+\epsilon)^{\ell_i} > M^2$.
Since
\begin{equation}
	G^{(\ell_i)}(y) = G^{(\ell_i-j)}(x) \, [G^{(-j)}(x)]^{-1} \, ,
\end{equation}
it follows from the sub-multiplicativity property~\eqref{e.submult} that either $\kappa(G^{(\ell_i-j)}(x))$ or $\kappa(G^{(-j)}(x))$ is bigger than $M$.
This proves property \eqref{e.final} in this case.

Now assume that $\ell_i < N$.
Since $K_i$ is an $N$-capturing set and also a tower of height $\ell_i$, it is actually a $m_i \ell_i$-capturing set, where $m_i \coloneq \lceil N/\ell_i \rceil$. 
Using characterization \eqref{e.capture_reformulated} of the capturing property, we see that
there exists a point $y \in B_i$ such that the segment of orbit $\{T^n y \st n \in \ldbrack 0, m_i \ell_i-1\rdbrack\}$ is contained in $K_i$ and contains the point $x$. 
Then $\kappa_{\mathrm{e}}(G^{(\ell_i)}(y)) \ge (1+\epsilon)^{\ell_i}$; furthermore, $G^{(m_i \ell_i)}(y) = [G^{(\ell_i)}(y)]^{m_i}$, so
\begin{equation}
\kappa(G^{(m_i \ell_i)}(y)) \ge \kappa_{\mathrm{e}}(G^{(m_i \ell_i)}(y)) = [\kappa_{\mathrm{e}}(G^{(\ell_i)}(y))]^{m_i} \ge (1+\epsilon)^{m_i \ell_i} > M^2 \, .
\end{equation}
Take $k \in \ldbrack 0, m_i \ell_i-1\rdbrack$ such that $T^k y = x$.
Using sub-multiplicativity of $\kappa$ once again, we conclude that either $\kappa(G^{(m_i\ell_i-k)}(x))$ or $\kappa(G^{(-k)}(x))$ 
is bigger than $M$. 
This concludes the proof of \eqref{e.final}.
That is, $G \in \mathcal{U}_M$.

We have shown that the open set $\mathcal{U}_M$ is dense. 
The proof of \cref{t.main} is complete.
\end{proof}

\section{Further comments}  \label{s.future}

The possibility of relaxing the hypotheses of \cref{t.main,t.dungeon} requires further investigation. It would be especially interesting to know if the results remain valid if we only require the space $X$ to have \emph{finite} topological dimension. (Of course, the requirement that the floors are clopen in \cref{t.dungeon} would have to be relaxed as well.)

We note that Downarowicz's \cite[Lemma~1]{Down} (which we mentioned in \cref{s.castles} above) admits an extension to spaces of any finite dimension, due to Gutman \cite[Theorem~6.1] {Gutman}. 
However, Gutman's theorem is false in infinite-dimensional spaces: see \cite[Theorem~1.1]{TTY}.
Let us also point out a related tower theorem of Bonatti and Crovisier \cite[Theorem~3.1]{BonCro}, which applies to diffeomorphisms of finite-dimensional manifolds and allows for periodic points (with some technical restrictions).

Towers in finite-dimensional spaces have been used for perturbing linear cocycles in the papers \cite{AvB,ABD,B_Quart}. Some arguments from \cite{ABD} were later simplified without the use of towers: see \cite{BNavas}.
Although we do not foresee a proof of \cref{t.main} (or any substantive variation) that circumvents the use of towers, this may be a possibility worth considering.


\bigskip

\noindent \textbf{Acknowledgements.} 
I thank Meysam Nassiri, Hesam Rajabzadeh, and Zahra Reshadat for enlightening conversations. 
I am grateful to Sylvain Crovisier for bring reference \cite{Gutman} to my attention.
I thank the referee for a number of corrections and suggestions that helped improve the exposition.


\end{document}